\newcommand{\bE}{\mathbb{E}}
\newcommand{\E}{\bE}      
\newcommand{\bone}{\mathbbm{1}}
\DeclareMathOperator*{\argmin}{arg\,min}
\DeclareMathOperator{\poly}{poly}
\newcommand{\ignore}[1]{}
\newtheorem{theorem}{Theorem}
\newtheorem{corollary}[theorem]{Corollary}
\newtheorem{lemma}[theorem]{Lemma}
\theoremstyle{definition}
\newtheorem{defn}[theorem]{Definition}
\theoremstyle{remark}
\newtheorem{remark}[theorem]{Remark}
\newtheorem{example}[theorem]{Example}
\newtheorem{question}[theorem]{Question}
\title{Reconstruction on Trees and Low-Degree Polynomials}
\author{Frederic Koehler\thanks{Department of Computer Science, Stanford University. Supported in part by E. Mossel’s Vannevar Bush Faculty Fellowship ONR-N00014-20-1-2826, NSF award CCF-1704417, NSF Award IIS-1908774, and N. Anari's Sloan Research Fellowship. Part of this work was completed while participating in the program \emph{Computational Complexity of Statistical Inference} at the Simons Institute for the Theory of Computing.} \and Elchanan Mossel\thanks{Department of Mathematics and IDSS, Massachusetts Institute of Technology. Supported by Simons-NSF collaboration on deep learning NSF DMS-2031883, by
Vannevar Bush Faculty Fellowship award ONR-N00014-20-1-2826 and by a Simons Investigator Award in Mathematics (622132)}}
\date{}
\begin{document}

\maketitle
\begin{abstract}
The study of Markov processes and broadcasting on trees has deep connections to a variety of areas including statistical physics, graphical models, phylogenetic reconstruction, Markov Chain Monte Carlo, and community detection in random graphs.
Notably, the celebrated Belief Propagation (BP) algorithm achieves Bayes-optimal performance for the reconstruction problem of predicting the value of the Markov process at the root of the tree from its values at the leaves.

Recently, the analysis of low-degree polynomials has emerged
as a valuable tool for predicting computational-to-statistical gaps. In this work, we investigate the performance of low-degree polynomials for the reconstruction problem on trees. Perhaps surprisingly, we show that there are simple tree models with $N$ leaves and bounded arity where (1) nontrivial reconstruction of the root value is possible with a simple polynomial time algorithm and with robustness to noise, but not with any polynomial of degree $N^{c}$ for $c > 0$ a constant depending only on the arity, and (2) when the tree is unknown and given multiple samples with correlated root assignments, nontrivial reconstruction of the root value is possible with a simple 
Statistical Query algorithm but not with any polynomial of degree $N^c$. These results clarify some of the limitations of 
low-degree polynomials vs. polynomial time algorithms
for Bayesian estimation problems. They also complement recent work of Moitra, Mossel, and Sandon who studied the circuit complexity of Belief Propagation.  As a consequence of our main result, we are able to prove a result of independent interest regarding the performance of RBF kernel ridge regression for learning to predict the root coloration: for some $c' > 0$ depending only on the arity, $\exp(N^{c'})$ many samples are needed for the kernel regression to obtain nontrivial correlation with the true regression function (BP). We pose related open questions about low-degree polynomials and the Kesten-Stigum threshold. 
\end{abstract}

\section{Introduction}
Understanding the computational complexity of random instances has been the goal of an extensive line of research spanning multiple decades and different research areas such as cryptography, high-dimensional statistics, complexity theory, and statistical physics.
In particular, this includes work on satisfiability and refutation of random constraint satisfaction problems  and on computational-to-statistical gaps. 
In much of this work, evidence for  computational hardness is indirect because there are well-known barriers to proving hardness from classical worst-case assumptions such as NP-hardness \cite{applebaum2008basing,bogdanov2006worst,feigenbaum1993random,akavia2006basing}.

Recently, low-degree polynomials have emerged as a powerful tool for predicting computational-to-statistical gaps. Computational-to-statistical gaps are situations where it is impossible for polynomial time algorithms to estimate a desired quantity of interest from the data, even though computationally inefficient (``information-theoretic'') algorithms can succeed at the same task.
Heuristics based on low-degree polynomials have especially been used in the context of Bayesian estimation and testing problems and partially motivated by connections with (lower bounds for) the powerful Sum-of-Squares proof system. 
More specifically, a recent line of work (e.g. \cite{hopkins2017efficient,hopkins2018statistical,kunisky2019notes, bandeira2019computational,gamarnik2020low,mao2021optimal,holmgren2020counterexamples,bresler2021algorithmic,wein2020optimal}) showed that a suitable ``low-degree heuristic'' can be used to predict computational-statistical gaps for a variety of problems such as recovery in the multicommunity stochastic block model, sparse PCA, tensor PCA, the planted clique problem, certification in the zero-temperature Sherrington-Kirkpatrick model, the planted sparse vector problem, and for finding solutions in random $k$-SAT problems. 
Furthermore, it was observed that the predictions from this method generally agree with those conjectured using other techniques (for example, statistical physics heuristics based on studying BP/AMP fixed points, see e.g. \cite{decelle2011asymptotic,deshpande2015finding,mohanty2021}). Some of the merits of the low-degree polynomial framework include that it is relatively easy to use (e.g.\ compared to proving SOS lower bounds), and that low degree polynomials capture the power of the ``local algorithms'' framework used in e.g.\ \cite{gamarnik2014limits,chen2019suboptimality} as well as algorithms which incorporate global information, such as spectral methods or a constant number of iterations of Approximate Message Passing \cite{wein2020optimal}.

In this work, we investigate the power of low-degree polynomials for the (average case) reconstruction problem on trees. We define the model and results in the next sections, but first give an informal summary.
The goal for reconstruction on trees is to estimate the value of the Markov process at the root given its value at the leaves (in the limit where the depth of the tree goes to infinity), and
two key parameters of the model are the arity of the tree $d$ and the magnitude of the second eigenvalue $\lambda_2$ of the broadcast chain. Importantly, when $d |\lambda_2|^2 > 1$ it is known \cite{kesten1966additional} that nontrivial reconstruction of the root is possible just from knowing the counts of the leaves of different types, whereas when $d |\lambda_2|^2 < 1$ such count statistics have no mutual information with the root (but more complex statistics of the leaves may)~\cite{mossel2003information}. This threshold $d |\lambda_2|^2 = 1$ is known as the \emph{Kesten-Stigum threshold} \cite{kesten1966additional} and it plays a fundamental role in other problems, such as algorithmic recovery in the stochastic block model \cite{abbe2017community} and phylogenetic reconstruction \cite{daskalakis2006optimal}. Count statistics can be viewed as degree 1 polynomials of the leaves, which begs the question of what information more general polynomials can extract from the leaves. 

In this paper, we answer this question in the limit case $\lambda_2 = 0$. Perhaps surprisingly, we find that the Kesten-Stigum threshold remains tight in the sense that even polynomials of degree $N^c$ for a small $c > 0$ are not able to correlate with the root label (Theorem~\ref{thm:poly-fail-intro}), whereas computationally efficient reconstruction is generally possible as long as $d$ is a sufficiently large constant (Theorem~\ref{thm:noise-robust-reconstruction-intro}) and even when a constant fraction of leaves are replaced by noise. Building on the polynomial lower bound, we prove superpolynomiallly (in fact, subexponentially) many samples are needed for Gaussian Kernel Ridge Regression (KRR) to (weakly) learn to regression function which predicts the root from the leaf colorations (Theorem~\ref{thm:kernel}). This gives a simple and natural model where KRR provably fails that is outside the reach of existing lower bounds such as \cite{kamath2020approximate}.

We also consider an analogous question where the tree is unknown, and the algorithm has access to $m$ i.i.d. samples of the Markov process
 where the root is biased towards an unknown label $Y^*$. In this setting, polynomials of degree $N^c$ again fail to correlate with the label $Y^*$, but we show that a simple algorithm, straightforwardly implementable in the Statistical Query (SQ) model \cite{kearns1998efficient}, can recover $Y^*$ in polynomial time (Theorem~\ref{thm:sq-tree-intro}). Together, these results show that low-degree polynomials behave very differently in our setting than one might intuit based on previous work in related settings, such as in random constraint satisfaction problems or the block model.

\subsection{Preliminaries}
\paragraph{Notation.} We use the standard notation $O_a(\cdot)$ to denote an upper bound with an implied constant which is allowed to depend on $a$; the notation $\poly_a(\cdot)$ is similar for denoting a bound which is polynomial in its parameters. 
We let $d_{TV}(P,Q)$ denote the total variation distance between distributions $P$ and $Q$ normalized to be in $[0,1]$ and we use $I(X; Y \mid Z)$ for the conditional mutual information of random variables $X,Y$ conditional on $Z$; see \cite{cover1999elements}. Given a vector $x$ and a subset of coordinates $S$, we let $x_S$ denote the $|S|$-dimensional vector 
corresponding to elements of $S$. 

\paragraph{Markov processes on trees.}
We consider Markov processes with state space $\Sigma = [q] = \{1,\ldots,q\}$ where $q \ge 1$ is the size of the alphabet. 
Let $M : q \times q$ be the transition matrix of a time-homogeneous Markov chain on $\Sigma$, also referred to as the \emph{broadcast channel}. For simplicity, we always assume henceforth that $M$ is \emph{ergodic} (irreducible and aperiodic, see \cite{durrett2019probability}) so it has a unique stationary distribution $\pi_M$.  
Let $T = (V,E,\rho)$ be a rooted tree with vertex set $V$, root $\rho \in V$, and where $E$ is the set of directed edges $(u,v)$ where $u$ is the parent of $v$ in the corresponding tree. 
The \emph{broadcast process} on tree $T$ of depth $\ell$ with transition matrix $M$ and 
root prior $\nu$ a probability measure on $[q]$ is given by
\[ \mu_{\ell,\nu}(x) := \nu(x_{\rho}) \prod_{(u,v) \in E} M_{x_u,x_v}. \]
When not otherwise indicated, $\nu$ is the stationary distribution for $M$.
The probability measure $\mu$ is a \emph{Markov Random Field} on the tree $T$. 
This means that if $A,B$ are subsets of the vertices of $T$ and all paths in $T$ from $A$ to $B$ pass through a third set of vertices $S$, then $X_A$ and $X_B$ are conditionally independent given $X_S$. This is called the \emph{Markov property}, see e.g. \cite{lauritzen1996graphical}.
In this paper, we focus on the setting of complete $d$-ary trees (i.e. trees where every non-leaf node has $d$ children, and all leaf nodes are at the same depth). For the $d$-ary tree of depth $\ell \ge 0$, we let $L$ be the set of leaves of the tree, i.e. the set of vertices in the tree at depth $\ell$. 

\begin{defn}
We say that \emph{reconstruction is possible} on the $d$-ary tree with channel $M$ if 
\[ \inf_{\ell \ge 1} \max_{c,c' \in [q]} 
d_{TV} \Big(\mathcal{L}_{\mu_{\ell}}(X_L \mid X_{\rho} = c), \mathcal{L}_{\mu_{\ell}}(X_{L} \mid X_{\rho} = c') \Big)> 0 \]
where the notation $\mathcal{L}_{\mu}(X | E)$ denotes the conditional law of $X$ under $\mu$ given event $E$ occurs,
$\mu_{\ell}$ is the corresponding broadcast process on the depth $\ell$ tree with root $\rho$ and $L = L_{\ell}$ is the set of leaves. 
\end{defn}
When reconstruction is possible, the Bayes-optimal estimate of the root given the leaves can be computed in linear time by passing messages up the tree using the Belief Propagation algorithm \cite{mezard2009information}; for our purposes, we will not need the explicit formula for BP, which can be derived by applying Bayes rule, but refer the interested reader to the reference. 

Given a matrix $M$, we let $\lambda_2(M)$ denote the second-largest eigenvalue of $M$ in absolute value. 
The \emph{Kesten-Stigum (KS) threshold} on the $d$-ary tree is given by the equation $d |\lambda_2(M)|^2 = 1$. Building upon the original work of \cite{kesten1966additional}, it was shown that the KS threshold is sharp for the problem of \emph{count reconstruction} on trees \cite{mossel2003information}: count reconstruction is possible when $d |\lambda_2(M)|^2 > 1$ and impossible when $d |\lambda_2(M)|^2 < 1$. 
\begin{defn}
Let $C(x) := (\#\{i : x_i = c\})_{c \in [q]}$ be the function which computes count statistics of an input vector $x$ with entries in $[q]$. 
We say that \emph{count-reconstruction is possible} on the $d$-ary tree with channel $M$ if 
$\inf_{\ell \ge 1} \max_{c,c' \in [q]} d_{TV}(\mathcal{L}_{\mu_{\ell}}(C(X_L) \mid X_{\rho} = c), \mathcal{L}_{\mu_{\ell}}(C(X_{L})  \mid X_{\rho} = c')) > 0$
where the notation $\mathcal{L}(X | E)$ denotes the conditional law of $X$ given event $E$,
$\mu_{\ell}$ is the corresponding broadcast process on the depth $\ell$ tree and $L = L_{\ell}$ is the set of leaves on this tree. 
\end{defn}
Next, we define a notion of noisy reconstruction which plays an important role in this paper:
\begin{defn}\label{def:noisy-reconstruction}
For $\epsilon \in (0,1)$, we say that \emph{$\epsilon$-noisy reconstruction is possible} on the $d$-ary tree with channel $M$ if 
$\inf_{\ell \ge 1} \max_{c,c'} d_{TV}(\mathcal{L}_{\mu_{\ell}}(X'_L = \cdot \mid X_{\rho} = c), \mathcal{L}_{\mu_{\ell}}(X'_{L} = \cdot \mid X_{\rho} = c')) > 0$
where $X'$ is the $\epsilon$-noisy version of the broadcast process values $X$, generated by independently for each vertex $v$, setting $(X'_L)_v = (X_L)_v$ with probability $1 - \epsilon$ and otherwise sampling $(X'_L)_v$ from $Uni([q])$\footnote{More generally, our results hold where the noise
is from any full support distribution on $[q]$.}.
\end{defn}
Note that in this definition, the law of $X'_L \mid X_{\rho} = c$ can equivalently be written as $\mathcal{L}_{\mu_{\ell}}(X_L \mid X_{\rho} = c) T_{\epsilon}$ where $T_{\epsilon}$ is the usual noise operator that independently resamples each coordinate of its input vector with probability $\epsilon$, see e.g. \cite{hopkins2018statistical,o2014analysis}. Finally, we recall from \cite{janson2004robust} the following standard definition: we say that \emph{robust reconstruction} on the $d$-ary tree with channel $M$ is possible if $\epsilon$-noisy reconstruction is possible \emph{for every} $\epsilon \in (0,1)$. 

\paragraph{Low-degree polynomials and computational-statistical gaps.} 
As discussed in the introduction, low degree polynomials have been studied in a wide variety of contexts and settings.
The recent work \cite{schramm2020computational} showed that a version of the low-degree polynomial heuristic can predict the recovery threshold for natural Bayesian estimation problems, even when the recovery threshold is below the detection/testing threshold. 
In the present work, we will use the following key definition from their paper\footnote{In our notation $X$ and $y$ are swapped compared to theirs, to match the convention in the broadcast process.}
\begin{defn}[Degree-$D$ Maximum Correlation \cite{schramm2020computational}]\label{def:deg-d-correlation}
Suppose that $(X,Y) \sim P$ where $X$ is a random vector in $\mathbb{R}^N$ and $Y$ is a random variable valued in $\mathbb{R}$. The \emph{degree-$D$ maximum correlation} is defined to be
\[ \text{Corr}_{ \le D}(P) := \sup_{f \in \mathbb{R}[X]_{\le D}, \E_P[f(X)^2] \ne 0} \frac{\E_P[f(X) \cdot Y]}{\sqrt{\E_P[f(X)^2]}} \]
where $\mathbb{R}[X]_{\le D}$ is the space of degree at most $D$ multivariate polynomials in variables $X_1,\ldots,X_N$ with real-valued coefficients. 
\end{defn}
\noindent
As explained there, when the target label $Y^*$ is a vector this definition can be applied with $Y$ equal to each of the coordinates of $Y^*$.
We note that we could rephrase our results in terms of a testing problem (as in much of the prior work on the low-degree method), but the above definition is  more natural in our context (it avoids the need to introduce a ``null distribution'' $Q$). In what follows, we omit the distribution $P$ the expectation is taken over as long as it is clear from context. 

Instead of referring to polynomial degree directly, we usually use the following more convenient and equivalent definition.
Suppose $f$ is a function $[q]^n \to \mathbb{R}$. We define the (Efron-Stein) \emph{degree} of $f$ to be the minimal $D$ such that there exist functions $f_S : [q]^{|S|} \to \mathbb{R}$ so that
$f(x) = \sum_{S \subset [n], |S| \le D} f_S(x_S)$.
One such minimal choice of $f_S$ is the
Efron-Stein decomposition over $Uni [q]^n$, see e.g. \cite{o2014analysis}; this notion is also equivalent to the minimal degree polynomial representing $f$ where the variables are the one-hot encoding $x \mapsto (\bone(x_i = c))_{i \in [n], c \in [q]}$.

\paragraph{Reconstruction below the KS threshold.} In this paper, we will largely consider the problem of tree reconstruction with matrices $M$ with $\lambda_2(M) = 0$; these exactly correspond to Markov chains which mix perfectly within a bounded number of steps. (There are many examples of such chains, for concreteness we give a very small example below in Example~\ref{example:chain}). Obviously, for such a chain $M$, $d |\lambda_2|^2 = 0$ for any value of $d$ so such a model is always below the Kesten-Stigum threshold. Nevertheless, based on general results from existing work we know that near-perfect reconstruction of the root is possible (e.g. using Belief Propagation, which computes the exact posterior distribution \cite{mezard2009information}). This is true as long as $d$ is sufficiently large, and even with a constant amount of noise $\epsilon$: 
\begin{theorem}[\cite{mossel2003information}, Theorem~\ref{thm:noise-robust-reconstruction} below]\label{thm:noise-robust-reconstruction-intro}
Suppose $M$ is a the transition matrix of a Markov chain with pairwise distinct rows\footnote{This condition is needed to rule out the case of e.g. a rank one matrix $M$ where reconstruction is clearly impossible. See also \cite{mossel2003information} for a more complex and precise condition.}
i.e. for all $i,j \in [q]$ the rows $M_i$ and $M_j$ are distinct vectors.
Let $\delta \in (0,1)$ be arbitrary. There exists $d_0 = d_0(M,\delta)$ and $\epsilon > 0$ such that for all $d \ge d_0$, $\epsilon$-noisy reconstruction is possible on the $d$-ary tree and furthermore there exists a polynomial-time computable function $f = f_{M,\ell}$ valued in $[q]$ such that
\[ \max_{c \in [q]} \Pr(f(X'_L) \ne X_{\rho} \mid X_{\rho} = c) < \delta \]
where $X'_L$ is the $\epsilon$-noisy version of $X_L$ (see Definition~\ref{def:noisy-reconstruction}).
\end{theorem}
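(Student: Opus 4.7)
The plan is to construct $f_{M,\ell}$ as a bottom-up recursive estimator---a noise-aware variant of Belief Propagation---and to establish a one-step contraction lemma which, iterated over the $\ell$ layers of the tree, gives the theorem. At each internal node $v$ with children $v_1,\ldots,v_d$ and recursively-produced estimates $\hat X_{v_1},\ldots,\hat X_{v_d}$, I would set
\[
\hat X_v := \argmax_{c \in [q]} \sum_{i=1}^d \log N_{c,\hat X_{v_i}},
\]
where $N$ is the ``effective channel'' from a parent label to a single child's noisy estimate, computed under the inductive hypothesis that every $\hat X_{v_i}$ is produced from $X_{v_i}$ via a transition with off-diagonal entries at most $\epsilon^*/(q-1)$, for a small $\epsilon^* = \epsilon^*(M)$ to be chosen. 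At the leaves I take $\hat X_v := X'_v$, so the base-case error is $\epsilon \le \epsilon^*$ as soon as the external noise satisfies $\epsilon \le \epsilon^*$.

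The heart of the argument is the per-layer contraction. By the Markov property and independence of the leaf noise and of the subtree computations, conditional on $X_v = c$ the children $X_{v_i}$ are i.i.d.\ from $M_c$ and the estimates $\hat X_{v_i}$ are conditionally independent given $X_v$. Because $M$ has pairwise distinct rows, for $\epsilon^*$ small enough the rows of $N$ are also pairwise distinct, so the gap $\delta(M,\epsilon^*) := \min_{c \ne c'} \mathrm{KL}(N_c \| N_{c'})$ is strictly positive. A standard Chernoff bound on the resulting log-likelihood-ratio test then yields, for some $c_M > 0$,
\[
\Pr(\hat X_v \ne c \mid X_v = c) \le (q-1)\exp(-c_M\, d).
\]
Choosing $d_0 = d_0(M,\delta)$ large enough makes this bound smaller than both $\epsilon^*$ (which preserves the inductive invariant and lets the recursion close at every level) and the target root error $\delta$ in the theorem statement.

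The main technical obstacle, and the step I would handle most carefully, is that the conditional law of $\hat X_{v_i}$ given $X_v = c$ is not literally that of a draw from $N_c$: the per-state error pattern inside each subtree can be asymmetric and depends on the realization below. Following \cite{mossel2003information}, the remedy is to phrase the induction as a componentwise upper bound on the full $q\times q$ transition kernel from $X_v$ to $\hat X_v$ (e.g.\ off-diagonals uniformly at most $\epsilon^*/(q-1)$) rather than on a scalar error probability; this invariant is preserved under the combining step and makes the worst-case Chernoff analysis against any kernel dominated by $N$ rigorous. Iterating this up all $\ell$ levels yields a uniform-in-$\ell$ error bound at the root, which in particular forces the TV distance in Definition~\ref{def:noisy-reconstruction} to remain bounded below, establishing $\epsilon$-noisy reconstruction. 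The estimator is manifestly computable in time $O(qd|V|)$, which is polynomial in the size of the tree.
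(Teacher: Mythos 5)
Your proposal matches the paper's argument (itself only a sketch deferring to the proof of Theorem 2.1 in \cite{mossel2003information}): a bottom-up recursive estimator, a single-level reconstruction step that succeeds for large $d$ by large deviations because the rows of $M$ are pairwise distinct, and an inductive invariant that models the subtree estimation errors as a small amount of (adversarial) per-child noise so the recursion closes. Your use of a likelihood-ratio test with a Chernoff/KL bound in place of the paper's empirical-distribution/Sanov test is an immaterial variant of the same large-deviations step, so this is essentially the same proof.
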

This exact statement does not appear in \cite{mossel2003information} but follows from arguments presented there; for completeness, we include a proof  (see Theorem~\ref{thm:noise-robust-reconstruction}). From the proof, we can see that a very simple recursive estimator is enough to solve this problem. 
\subsection{Our Results}
We study the power of low-degree polynomials for the problem of reconstructing the root of a Markov process. We consider this question in the context of two very closely related versions of the model which have both been extensively studied in the literature. 

    \emph{Reconstruction with a known tree.} In this setting, the algorithm is given access to the leaf values from a single realization of the Markov process, and the goal is to estimate the root (where we are going to be interested in estimators which are low-degree polynomials of the leaves). The tree structure is known and the estimator/polynomial is allowed to depend on this information directly.
    
    \emph{Reconstruction with an unknown tree.} In this setting, the data is still generated by a complete $d$-ary tree but the tree (in other words, the true ordering of the leaves) is unknown to the algorithm. This version of the model has been extensively studied due to close connections to the problem of phylogenetic reconstruction in biology, see e.g. \cite{daskalakis2006optimal,Felsenstein:04,steel2016phylogeny}. Because this task is more difficult information-theoretically\footnote{Note that in the single-sample case ($m = 1$), the information available to the algorithm would only be count statistics, which we know are insufficient for reconstruction below the KS threshold \cite{mossel2004survey}.}, the algorithm is given access to $m$ i.i.d. samples from the broadcast model; we give a more precise definition of the model below. 

\paragraph{Results for reconstruction with a known tree.}
We consider the problem of tree reconstruction with matrices $M$ with $\lambda_2(M) = 0$; these exactly correspond to Markov chains which mix perfectly within a bounded number of steps. As discussed above in Preliminaries, while such models are always below the Kesten-Stigum threshold for any value of the the arity $d$, under fairly weak conditions on $M$ the reconstruction problem is still solvable for $d$ sufficiently large (Theorem~\ref{thm:noise-robust-reconstruction-intro}). This is true even with noise and with a very simple reconstruction algorithm.
As our main result we show that despite the algorithmic tractability of this problem, only very high degree polynomials are able to get any correlation with the root, in the same sense as Definition~\ref{def:deg-d-correlation}. 
\begin{theorem}[Corollary~\ref{corr:poly-fail} below]\label{thm:poly-fail-intro}
Let $M$ be the transition matrix of a Markov chain on $[q]$ and suppose that $1 \le k \le q$ is such that $M^k$ is a rank-one matrix.
For any function $f : [q]^L \to \mathbb{R}$ of Efron-Stein degree
at most $2^{\lfloor \ell/(k - 1) \rfloor}$ of the leaves $X_L$ and any prior $\nu$ on the root,
\[ \E[f(X_L) \cdot (\bone(X_{\rho} = c) - \nu(c))] = 0. \]
\end{theorem}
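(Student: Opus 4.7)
The plan is to reduce the degree-$D$ correlation bound to a statement about marginal independence, and then to establish that independence via a structural argument on the Steiner subtree spanned by $\{\rho\}\cup S$ together with the rank-one property of $M^k$.

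First I would unpack the degree condition. Any $f$ with Efron-Stein degree at most $D:=2^{\lfloor \ell/(k-1)\rfloor}$ admits a representation $f(x_L)=\sum_{|S|\le D} f_S(x_S)$, so by linearity
\[
\E\bigl[f(X_L)(\bone(X_\rho=c)-\nu(c))\bigr]\;=\;\sum_{|S|\le D}\bigl(\E[f_S(X_S)\bone(X_\rho=c)]-\nu(c)\,\E[f_S(X_S)]\bigr).
\]
Since the claim must hold for every such $f$, in particular for $f=f_S$ supported on a single $S$ of size at most $D$, it is equivalent to proving: for every $S\subseteq L$ with $|S|\le D$, the random variables $X_\rho$ and $X_S$ are independent. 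This is the target.

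Fix such an $S$ and consider the Steiner subtree $T_S$ of $T$ containing $\{\rho\}\cup S$; contract maximal chains of internal degree-two vertices to obtain a compressed tree $\tilde T_S$ with leaf set $S$, branching $\ge 2$ at every internal vertex, and each edge labeled by the length of the chain it represents in $T$. The sum of edge labels along any root-to-leaf path of $\tilde T_S$ equals the depth of that leaf in $T$, hence is at most $\ell$. The role of the cardinality bound $|S|\le 2^{\lfloor \ell/(k-1)\rfloor}$ is to guarantee enough vertical room in $\tilde T_S$ for a segment of length $\ge k$ to appear at a useful location, so that the identity $M^k=\mathbf{1}\mu^T$ can be invoked.

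I would then prove independence by induction on the depth $\ell$, using the tree Markov property. In the base cases where some vertex $u$ on the path from $\rho$ to $S$ has $d_T(\rho,u)\ge k$ and every leaf in $S$ lies below $u$, the distribution of $X_u\mid X_\rho$ is $M^{d_T(\rho,u)}=\mu$, independent of $X_\rho$, so $X_S\perp X_\rho$ follows from $X_S$ being a function of $X_u$ plus independent randomness below $u$. In the branching case where $\rho$ is itself a branching vertex of $\tilde T_S$ and splits $S$ into $S_1,\ldots,S_{d'}$ across $d'\ge 2$ root-subtrees, I condition on $X_\rho$: the $X_{S_i}$ are then conditionally independent, and I apply the induction hypothesis to each subtree of depth $\ell-1$ with its (strictly smaller) leaf budget $|S_i|\le |S|/2\le 2^{\lfloor (\ell-1)/(k-1)\rfloor}$ when $\ell$ is divisible by $k-1$, and with $|S_i|\le D$ otherwise.

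The main obstacle is the combinatorial accounting that matches the doubling rate $2^{\lfloor\ell/(k-1)\rfloor}$: a naive pigeonhole on edge lengths in $\tilde T_S$ gives only $|S|\lesssim 2^{\ell/k}$, because it insists on finding a length-$k$ segment on every root-to-leaf path. The improvement to the $k-1$ denominator comes from the fact that at a branching point we recurse on subtrees of depth $\ell-1$ rather than $\ell-k$, so only $k-1$ depth levels are consumed per leaf-budget doubling; the delicate step is setting up the induction so that ``branching'' and ``waiting for a length-$k$ segment'' trade off cleanly, and so that the short-initial-segment case at $\rho$ can always be handled either by recursion into a smaller-$|S|$ subproblem or by locating a length-$\ge k$ segment slightly further down via the structure of $\tilde T_S$.
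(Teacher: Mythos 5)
Your reduction to showing $X_\rho$ is independent of $X_S$ for every $|S|\le D$, and your idea of exploiting the rank-one channel $M^k$ along long degree-two chains of the Steiner tree, are exactly the right ingredients and match the paper's strategy. The gap is in the induction you set up to do the counting, and it is the very step you flag as ``delicate'': at a branching vertex the claim $|S_i|\le |S|/2$ is simply false, since a vertex of degree $d'\ge 2$ in $\tilde T_S$ can split $S$ as $|S|-1$ versus $1$; all you get is $|S_i|\le |S|-1$. Without the halving, your per-branch recursion only certifies a doubling of the leaf budget every $k$ levels (one level is consumed by the branching vertex itself, plus up to $k-1$ levels waiting for the next branch point), which is precisely the $2^{\ell/k}$-type bound you yourself identify as insufficient. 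Your case analysis is also incomplete: you treat the case where all of $S$ lies at distance at least $k$ below $\rho$ and the case where $\rho$ itself branches, but not the case where the first branching vertex sits at depth between $1$ and $k-1$.

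The paper closes this with a different mechanism. First, a pruning step: whenever the Steiner tree $T_S$ contains an isolated directed path $u_0,\dots,u_k$ of length $k$ (all of $u_1,\dots,u_{k-1}$ of degree $2$ in $T_S$), one may delete every leaf of $S$ below $u_k$ without changing $I(X_\rho;X_S)$; this is justified by the chain rule for mutual information together with the fact that $X_{S\setminus S_{u_k}}\to X_{u_0}\to X_{u_k}$ is a Markov chain whose last channel is the rank-one $M^k$. After iterating, $T_S$ has no isolated length-$k$ directed paths, and then one counts \emph{globally} rather than per branch: every node of the pruned tree at depth $\ell'-(k-1)$ has at least two descendants at depth $\ell'$, so the number of nodes per level doubles every $k-1$ levels, giving $|S|\ge 2^{\lfloor \ell/(k-1)\rfloor}$. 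The global count is what rescues the unbalanced splits that break your per-branch budget argument; you would need either to adopt it or to strengthen your induction hypothesis to track the depth of the next forced branching on each root-to-leaf path.
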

\begin{remark}[Tightness]\label{rmk:tightness}
For fixed $k$, this result is tight up to the base of the exponent. 
When $M$ satisfies the assumption of Theorem~\ref{thm:noise-robust-reconstruction-intro}, there is a function on a constant-degree subtree to recover the root and (by Fourier expansion) this is a polynomial of degree $e^{O(\ell)}$.
\end{remark}
To interpret this result, observe that $N = d^{\ell}$, so taking $d$ a constant,
the Theorem shows that polynomials of degree even $N^c$ for an explicit constant $c = c(d,k) > 0$ fail to get any correlation with the root label. In comparison, in the previously mentioned contexts in the low-degree polynomials literature, the threshold for polynomials of degree $O(\log N)$ matches the conjectured threshold for polynomial time algorithms (see e.g. \cite{hopkins2017efficient,hopkins2018statistical,kunisky2019notes}) and polynomials of degree $N^c$ correspond to conjectural thresholds for subexponential time algorithms (see e.g.  \cite{bandeira2019computational,ding2019subexponential}).

\paragraph{A consequence: subexponential sample complexity lower bound for the RBF  kernel.} As a consequence of our main result, we can analyze the behavior of kernel regression methods in our model. 
Kernel ridge regression is one of the canonical methods for solving supervised learning problems, including classification problems (see e.g. \cite{muthukumar2021classification}).
In many high-dimensional settings, it is believed that the function learned using a standard kernel (e.g. Gaussian or polynomial) is essentially a low-degree polynomial.
Standard results in learning theory  (\cite{shalev2014understanding}) imply that kernel ridge regression with a Gaussian/RBF (Radial Basis Function) kernel in $N$ dimensions can learn a degree $\ell$ polynomial on the hypercube or sphere using roughly $O(N^{\ell})$ samples. Establishing lower bounds on KRR is generally \emph{much harder}. In certain particularly tractable settings (e.g.\ data from the uniform distribution on the hypercube) it has been recently shown explicitly that kernel regression (only) learns a low-degree polynomial \cite{ghorbani2021linearized,mei2021generalization}. 

It seems plausible to guess that kernel ridge regression with a standard kernel 
will require subexponentially many samples of (leaf label, root label) pairs in order to learn to predict the root. 
We are able to verify this prediction in the case of the popular RBF (Radial Basis Function) kernel. 
See Section~\ref{sec:kernel} for formal notation and background on kernel ridge regression.
\begin{theorem}[Theorem~\ref{thm:kernel} below]\label{thm:kernel-intro}
Let $M$ be the transition matrix of a Markov chain on $[q]$ and suppose that $1 \le k \le q$ is such that $M^k = \pi \pi^T$ is a rank-one matrix, and suppose that $\pi$ has at least two nonzero entries.
Then the for any color $c \in [q]$ and prior $\nu$ for the root coloration $X_{\rho}$, the following is true. Given $m$ i.i.d. samples $(x_1,y_1),\ldots,(x_m,y_m)$ from the broadcast model on the $d$-ary tree with $N$ leaves and broadcast channel $M$, where $x_i$ is a one-hot encoded vector of leaf colorations and $y_i = 1(X_{\rho} = c) - \nu(c)$ is the centered indicator of the leaf coloration, we have that for any bandwidth $\sigma \ge 0$ and ridge parameter $\lambda \ge 0$, for $w$ the output of ridge regression in RKHS space with those parameters and feature map $\varphi$, that with probability at least $1 - \delta$
\[ \frac{\mathbb{E}_{x_0,y_0}[y_0 \langle w, \varphi(x_0) \rangle]}{\sqrt{\mathbb{E}_{x_0,y_0}[y_0^2]}} = O(\sqrt{1/N}) \]
provided that $m/\delta = O(e^{N^{a}})$ where $a = a(M,d) > 0$ is independent of the depth of the tree.
\end{theorem}
This establishes a new and illustrative example where KRR performs poorly in high dimensions, even though the ground truth is a relatively ``simple'' and the labels are closely related to the structure of the input data. 
Note that the conclusion 
implies that $\mathbb{E}[(y_0 - \langle w, \varphi(x_0) \rangle)^2] \ge (1 - O(1/\sqrt{N}))\mathbb{E}[y_0^2]$, i..e. kernel ridge regression does not significantly outperform the constant zero estimator (``null risk'') unless it is given at least a subexponential number of samples. Also, as with Remark~\ref{rmk:tightness} this result is tight up to the power of the exponent $c$, since a subexponential degree polynomial exists which predicts the root well
and it can provably be learned with subexponential number of samples by KRR \cite{shalev2014understanding}.

In Figure~\ref{fig:krr}, we test kernel ridge regression in a simulation in both the case $\lambda_2 = 0$ and $\lambda_2 > 0$: consistent with our result, KRR fails to beat the null risk when $\lambda_2 = 0$; interestingly, it also fails for moderately small values of $\lambda_2$ as well, which is related to the Open Problem we discuss later. In the figure,  KRR is performed using 2000 i.i.d. samples of $(x,y)$ pairs with $x$ the one-hot encoded leaf colorations and $y$ the centered indicator that the root color is $1$, as in Theorem~\ref{thm:kernel-intro}. Bandwidth and ridge penalty are selected via grid search on a validation set. The results for the baseline (RecMaj) are averaged over 16000 samples.
\begin{figure}
    \centering
    \includegraphics[scale=0.60]{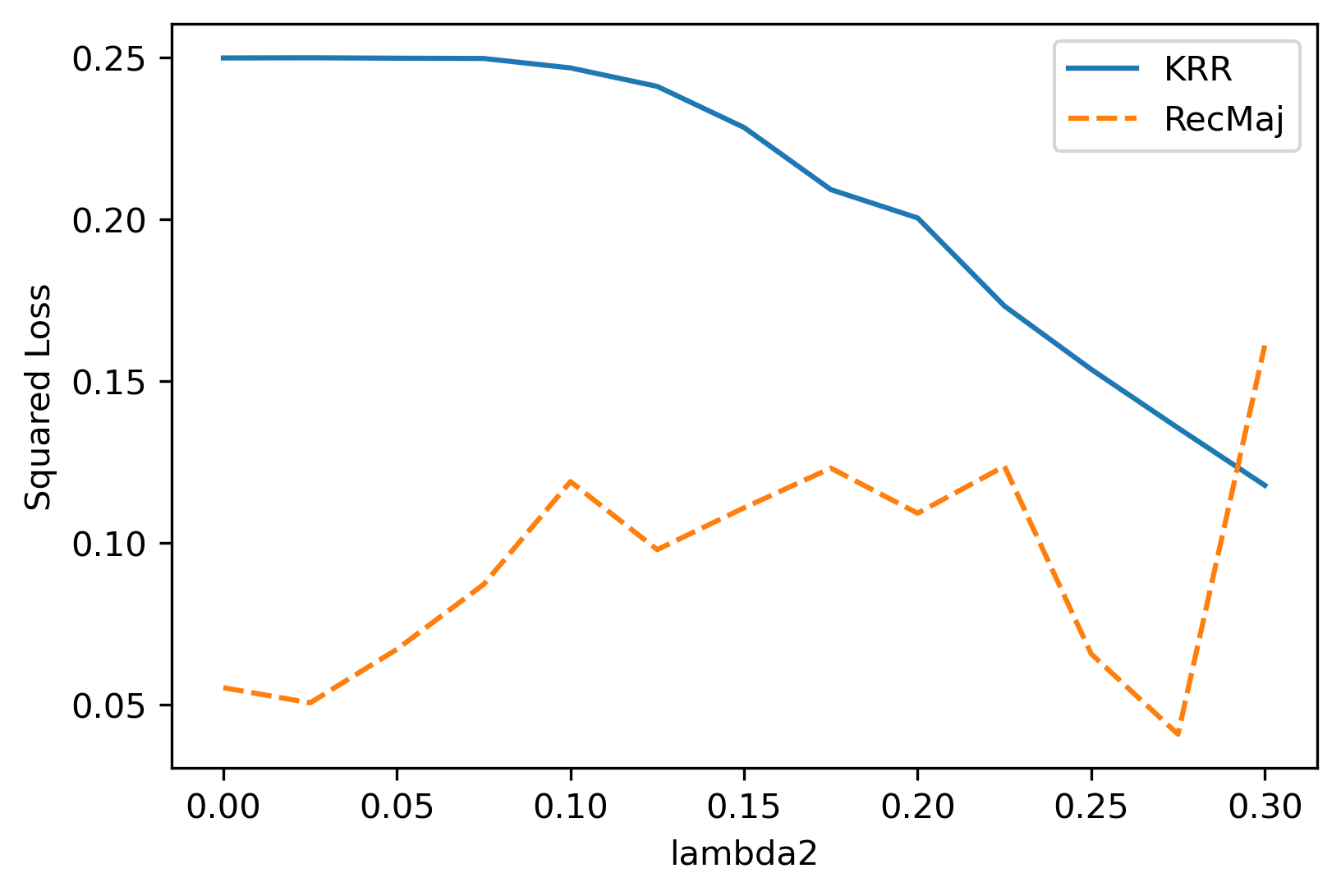}
    \caption{RBF Kernel Ridge Regression (KRR, blue line) test error in squared loss for predicting the root from leaves with data generated by  broadcast operator $M_{\lambda_2} := (1 - \lambda_2) M_0 + \lambda_2 I$ and varying $\lambda_2$. $M_0$ is from \eqref{eqn:M0}, and it can be directly checked that $\lambda_2$ is the second eigenvalue of $M_{\lambda_2}$. The tree is $10$-ary with depth $3$, and the prior is $\nu = (0.5,0.5,0)$. RecMaj (orange dotted line) is a baseline estimator which generalizes the one used in proof of Theorem~\ref{thm:noise-robust-reconstruction-intro}. Consistent with
     Theorem~\ref{thm:kernel-intro}, the output of KRR fails to correlate with the root coloration when $\lambda_2 = 0$ (since $0.25$ is the \emph{null risk}, the squared loss for the optimal constant predictor), even though 
     RecMaj correlates significantly with the root for all values of $\lambda_2$. In fact, KRR fails to correlate for all values of $\lambda_2$ up to around $0.1$, suggesting that the failure of KRR and low-degree polynomials should extend beyond $\lambda_2 = 0$. }
    \label{fig:krr}
\end{figure}

\paragraph{Results for reconstruction with an unknown tree.}
Formally, we consider the following variant of the generative model which is  a variant of models in the phylogenetics literature. Briefly, in this model we generate 
$m$ i.i.d. realizations of the broadcasting model, where the tree is random and the prior on the root is biased towards a random root label. We include a parameter $\epsilon \ge 0$ which can be used to add noise to the final output the of model, just as above. 
\begin{defn}[$\epsilon$-Noisy Repeated Broadcast Model on Random Tree]\label{defn:repeated-broadcasting}
Let $\ell \ge 1, d \ge 2, m \ge 1, q \ge 1, \epsilon \ge 0$ and let $M$ be a Markov chain on $[q]$. Define $R = R_{\ell,d,m,M,\epsilon}$ by the following process:
\begin{enumerate}
\item Sample $Y^* \sim Uni([q])$, and $\tau \sim Uni(S_N)$ is a random permutation. Let $T$ be the $d$-ary tree on the set of leaves ordered by $\tau$, so e.g. vertices $\tau(1)$ and $\tau(2)$ are siblings in $T$. 
\item Sample $X^{(1)},\ldots,X^{(m)}$ i.i.d. from the $\epsilon$-noisy broadcast process (see Definition~\ref{def:noisy-reconstruction}) on $T$ with prior $(2/3) \delta_{Y^*} + (1/3) Uni([q])$ and transition matrix $M$, where $\delta_{Y^*}$ is a delta distribution on $Y^*$. Let $\mathbb{X} = (X^{(1)}_L, \ldots, X^{(m)}_L)$. 
\end{enumerate}
\end{defn}
The goal of the learning algorithm in the unknown tree model is this: given $m$ samples of the leaves of the broadcast process, encoded in $\mathbb{X}$,  reconstruct the root label $Y^*$ which the prior is biased towards\footnote{We could also consider the model where the root label is always $Y^*$. The soft bias we consider is nicer for minor technical reasons, and seems natural given we allow to add noise elsewhere in the model.}.  We discuss the reasons for defining the model this way:
    1. The permutation $\tau$ ensures that the coordinates of $X^{(i)}_L$ behave in a symmetric way, or equivalently that the order of those coordinates is not semantically meaningful; observe that if we omitted it, then the first $d$ coordinates would always be neighbors in the tree. This is standard in the phylogenetics literature \cite{steel2016phylogeny} and this kind of symmetry is also assumed in the literature on low-degree polynomial hardness, see e.g. discussion in \cite{holmgren2020counterexamples}; in sparse PCA this is analogous to how the support of the planted sparse vector is chosen uniformly at random among size-$k$ subsets.
    2. The choice that root assignments are drawn from a tilted/biased distribution is different from the previous literature motivated by phylogenetics, where the root value is generally sampled fresh each time.
    This does not have a significant effect on how the algorithms used to estimate the tree work. The reason for our setup is to allow for straightforward comparison between SQ and low-degree polynomial models. If the root value was sampled from an unbiased measure each time, 
    it would not make sense for an SQ algorithm to estimate it, since SQ has no concept of individual samples. 

To be formal, we define the Statistical Query VSTAT oracle analogue of $R$ in the usual way \cite{feldman2017statistical}. The oracle is defined conditional on $Y^*$ and the tree $T$, so the order of leaves in the tree will be consistent between different calls to the oracle. As a reminder, vector-valued queries are implemented in the SQ model by querying each coordinate of the vector individually.
\begin{defn}[$VSTAT(m)$ Oracle]
Let $Y^*$, $\tau$, $T$, and $M$ be as in Definition~\ref{defn:repeated-broadcasting}. Conditional on $Y^* = y^*$ and the tree $T = t$, we define $VSTAT(m)$ to be an arbitrary oracle which given a query function $\varphi : [q]^L \to [0,1]$, returns $p + \zeta_{\varphi}$ where $p := \E_R[\varphi(X^{(1)}) \mid Y^* = y^*, T = t]$ where $\zeta_{\varphi}$ is arbitrary (can be adversarily chosen) such that $|\zeta_{\varphi}| \le \max\left(\frac{1}{m}, \sqrt{\frac{p(1 - p)}{m}}\right)$.
\end{defn}

We now state our results in this model. Just as in the known tree case, there is a relatively simple algorithm which achieves nearly optimal performance in this setting when $\lambda_2(M) = 0$ and $d$ is a large constant, and furthermore this algorithm can straightforwardly be implemented in the SQ model described above. Establishing this  requires proving a new result in tree reconstruction, since (for example) the setting $\lambda_2(M) = 0$ which we care about rules out the use of Steel's evolutionary distance (see e.g. \cite{steel2016phylogeny,moitra2018algorithmic}) commonly used in reconstruction algorithms in phylogeny, as Steel's distance is only well-defined for nonsingular phylogenies, and some kinds of tree models with singular matrices are actually computationally hard to learn \cite{mossel2005learning}.
\begin{theorem}[Theorem~\ref{thm:sq-tree} below]\label{thm:sq-tree-intro}
Suppose $M$ is a the transition matrix of a Markov chain with pairwise distinct rows, i.e. for all $i,j \in [q]$ the rows $M_i$ and $M_j$ are distinct vectors, and suppose $\lambda_2(M) = 0$. There exists $d \ge 1$ and $\epsilon > 0$ so that the following result holds true for the complete $d$-ary tree with any depth $\ell \ge 1$. 
For any $\delta > 0$, there exist a polynomial time algorithm with sample complexity $m = poly_M(\log N,\log(1/\delta))$ from the $\epsilon$-noisy repeated broadcast model (Definition~\ref{defn:repeated-broadcasting})
which with probability at least $1 - \delta$:
    1) outputs the true tree $T$ (equivalently, the true permutation $\tau$),
    2) outputs $\hat{Y}$ such that $\hat{Y} = Y^*$.
Also, this algorithm can be implemented 
using a $VSTAT(m)$ oracle with $m = poly_M(\log(N/\delta))$ and polynomially many queries.
\end{theorem}
Just as above, we show that the only polynomials which achieve any correlation at all with $Y^*$ have to be at least degree $N^c$:
\begin{theorem}[Theorem~\ref{thm:low-degree-failure2} below]\label{thm:low-degree-failure2-intro}
Let $M$ be the transition matrix of a Markov chain on $[q]$ and suppose that $1 \le k \le q$ is such that $M^k$ is a rank-one matrix.
If $c \in [q]$ is arbitrary and $f$ is a polynomial with Efron-Stein degree strictly less than $2^{\lfloor \ell/(k - 1) \rfloor}$, then 
$\E_R[f(\mathbb{X})(\bone(Y^* = c) - 1/q)]  = 0$
where $R$ is as defined in Definition~\ref{defn:repeated-broadcasting}.
\end{theorem}
\subsection{Further Discussion}
\paragraph{Related work: complexity of reconstruction on trees.} 
Our work follows a line of previous work which identified the Kesten-Stigum threshold as a potential complexity barrier in the context of the broadcast model on trees. The work \cite{mossel2016deep} showed that algorithms that do not use correlation between different features (named ``shallow algorithms") cannot recover phylognies above the Kesten-Stigum threshold, where other (``deeper") algorithms can do so efficiently; 
the motivation in \cite{mossel2016deep} was to find simple data models where depth is needed for inference. 
More standard complexity measures were studied in \cite{moitra2020parallels} who obtained a number of results on the circuit complexity of inferring the root in the broadcast process. 
They conjectured that below the Kesten-Stigum threshold, inferring the root is $NC1$-complete and proved it 
for one specific chain satisfying $\lambda_2 = 0$. Although there are some connections between low-degree polynomials and certain circuit classes \cite{linial1993constant}, the results of this work and \cite{moitra2020parallels} are incomparable 
and the techniques for establishing the lower bound are very different. 
Finally, we note the work \cite{jain2019accuracy} which studied the power of message-passing algorithms on finite alphabets: they proved such algorithms fail to recover all the way down to the Kesten-Stigum threshold, even in the simplest case of the binary symmetric channel with $q = 2$.

\paragraph{Message passing vs. low-degree polynomials.} One of the attractive properties of the class of low-degree polynomials is that it generally captures the power of a constant (or sufficiently slowly growing) number of iterations of message-passing algorithms such as BP, AMP, and Survey Propagation (see e.g. \cite{bresler2021algorithmic} and Appendix A of \cite{gamarnik2020low}), which is interesting since a constant number of steps of these algorithms are indeed useful for many statistical tasks. On the other hand, in our models, belief propagation (which computes the exact posterior) succeeds with high probability 
whereas low-degree polynomials fail. This is not a contradiction:
in our setting, BP requires $\Theta(\log N)$ iterations for the messages to pass from the leaves to the root and this is (as our main result shows) too large to simulate with low-degree polynomials. 

\paragraph{SQ and Low-Degree Polynomials.} The recent work \cite{brennan2020statistical} established sufficient conditions for predictions to match between the Statistical Query (SQ) and low-degree polynomial heuristic, in a general setting. Nevertheless, in the unknown tree setting we consider above we saw that SQ algorithms perform significantly better than low-degree polynomials. The results of \cite{brennan2020statistical} cannot be immediately applied to our setting, because we have phrased the problem as an estimation problem instead of a testing (a.k.a. distinguishing) problem; however, this is itself not the reason for the discrepancy as 
we could rephrase our problem in terms of testing the color of the root. 
Instead, the reason seems to be due to the ``niceness condition'' needed for their theory to apply.
They show that the niceness condition will be satisfied for noise-robust problems when the ``null distribution'' in the testing problem is a product measure. 
Our setup is indeed noise robust (see Theorem~\ref{thm:noise-robust-reconstruction-intro}).
However, if we rephrased our problem as a testing one the null distribution will be a graphical model (with no bias at the root) and not a product measure.

Recently there has also been interest in understanding lower bounds against kernel learning algorithms (including polynomial kernels), in part motivated by connections to neural networks, and this involves connections to the SQ framework. See e.g. \cite{kamath2020approximate} and references within. These methods can, for example, prove strong lower bounds against learning parities with kernel ridge regression since parities have large SQ-dimension. See also \cite{koehler2018representational} for another example where polynomial degree lower bounds were established for a Bayesian inference task, though only polylogarithmic in the dimension.

\paragraph{Noise robustness and learning parities.} It was shown in \cite{janson2004robust} that the KS threshold is sharp for  \emph{robust reconstruction}, where we recall from above that (by definition) robust reconstruction on the $d$-ary tree with channel $M$ is possible if \emph{for all} noise levels $\epsilon \in (0,1)$, $\epsilon$-noisy reconstruction is possible. 
At first glance, this appears similar to the idea in the low-degree polynomials literature that the model should be \emph{slightly noisy} to rule out the example of learning parities (which can be solved in the noiseless setting by Gaussian elimination, but not when there is noise).
In fact the two notions are quite different: the robust reconstruction result shows that reconstruction becomes impossible for \emph{large noise levels} $\epsilon > \epsilon_*$ above a critical threshold $\epsilon_*$, whereas for small (but fixed) $\epsilon > 0$ the $\epsilon$-noisy reconstruction problem often \emph{remains solvable} --- see Theorem~\ref{thm:noise-robust-reconstruction-intro}. Our examples are fundamentally different to the parity example: (1) for the unknown tree version of our model, we showed that the problem is solvable with an SQ oracle whereas parities are well-known to be hard for SQ \cite{blum2003noise}, (2) relatedly, the algorithms which solve our problems are not ``algebraic'' in nature, and (3) our results hold irrespective of adding a small amount of noise, whereas learning parities with any constant amount of noise is conjecturally hard \cite{valiant2012finding}.
Altogether, we can think of these results as suggesting a new, more nuanced picture of low degree vs. robustness to noise. Whereas before the main dichotomy in the computational complexity of inference literature has been between zero noise and any noise, in our model reconstruction algorithms such as BP can tolerate a small amount of noise, but fail when the noise level crosses some critical threshold. At least in our setting with $|\lambda_2(M)| = 0$, low-degree polynomials appear to capture the latter ``large noise'' behavior instead of the ``small noise'' difficulty of the problem. 

\paragraph{Open Problem.}  What happens when $\lambda_2(M) \ne 0$?
It is natural to wonder if the Kesten-Stigum threshold $d|\lambda_2|^2 = 1$ is sharp for low-degree polynomial reconstruction, analogous to how it is sharp for robust reconstruction. Our main lower bound result (Theorem~\ref{thm:poly-fail-intro}) is consistent with this intuition.
Also consistent with this intuition, our simulation result Figure~\ref{fig:krr} suggests that Kernel Ridge Regression may continue to fail for small but nonzero values of $\lambda_2(M)$. Formally, we ask:
\begin{question}[Kesten-Stigum is sharp for Low-Degree Polynomials?]
Suppose that $d$ and transition matrix $M$ are such that $d|\lambda_2(M)|^2 < 1$, i.e. we are below the Kesten-Stigum threshold. Let $D_{N}$ be an arbitrary function of $N = d^{\ell}$ such that $D_N = O(\log N)$ as $\ell \to \infty$. Is it true that the degree-$D_N$ maximum correlation between the broadcast process at the leaves $X_L$ and the root $X_{\rho}$ in the sense of Definition~\ref{def:deg-d-correlation} is asymptotically zero, i.e. $\lim\inf_{\ell \to \infty} \text{Corr}_{\le D_N} = 0$?
Equivalently, is it true that
\[ \lim\inf_{\ell \to \infty} \max_{c \in [q]} \sup_{deg(f) \le D_N, \E[f(X_L)^2] = 1} \E_{\mu_{\ell}}[f(X_L) (1(X_{\rho} = c) - \nu(c))] = 0 ? \]
\end{question}
Here we make the common choice of looking at $\log N$ degree polynomials (see e.g. \cite{hopkins2017efficient,kunisky2019notes}), but any degree is interesting. 

With the same intuition, we ask if a similar result to Theorem~\ref{thm:kernel-intro}, the lower bound for kernel ridge regression, holds below the Kestum-Stigum threshold --- see Figure~\ref{fig:krr} for related simulation results, which support the failure of KRR for small values of $\lambda_2$. We note that in our experiment the threshold where KRR starts to work is much closer to $d\lambda_2 = 1$. It is quite possible that this is a finite-depth effect since the experiment was done with a relatively shallow tree. 
Of course, if the sharp threshold is not the Kestum-Stigum threshold it would be extremely interesting to understand what the correct threshold is as a function of the broadcast model parameters. 

\section{Technical Overview}
The detailed proofs of all results are given later. Here, we explain the high-level proof ideas, which we believe are relatively clean and conceptual. 
Before proceeding, 
we give the following concrete example of a Markov chain $M_0$ with $\lambda_2(M_0) = 0$ and $q = 3$:
\begin{equation}\label{eqn:M0}
M_0 = 
\begin{bmatrix}
0.5 & 0 & 0.5 \\
0.25 & 0.5 & 0.25 \\
0 & 1 & 0
\end{bmatrix}, \qquad M_0^2 = 
\begin{bmatrix}
0.25 & 0.5 & 0.25 \\
0.25 & 0.5 & 0.25 \\
0.25 & 0.5 & 0.25 
\end{bmatrix}.
\end{equation}
Since $M_0^2$ is rank one, it must be the case that $\lambda_2(M_0) = 0$.

\paragraph{Failure of low-degree polynomials (Theorem~\ref{thm:poly-fail-intro}).} We want to show that any low-degree polynomial $f$ of the leaves of the broadcast tree fails to correlate with the root. In general, it may be very difficult to compute the maximal correlation among all low-degree polynomials; what makes it possible in our case is that the correlation is exactly zero. If $c \in [q]$ is a color and $\nu$ is the prior at the root, we want to show
$\mathbb{E}[f(X_L)(1(X_{\rho} = c) - \nu(c))] = 0$. 
(Recall $L$ is the set of leaves and $X_L$ the leaf colorations.)
The first step is to use linear of expectation to break $f(X)$ into monomials: more formally, if $f(X) = \sum_{|S| \le D} f_S(X)$ is the Efron-Stein decomposition for a polynomial of degree $D$, then to show 
the goal it clearly suffices to show
\[ \mathbb{E}[f_S(X_L)(1(X_{\rho} = c) - \nu(c))] = 0. \]
Crucially, the monomial $f_S$ is a function which depends only on a set of at most $D$ leaf colorations $X_D$. Therefore, the result follows if we can show those leaves by themselves are independent of the root coloration. This is shown by performing an \emph{iterative trimming} procedure on the minimal subtree spanned by the root and the leaves in $S$: every time there is an isolated path of length $k$ (where $M^k$ is rank one: $k = 2$ in the example above) all information is lost from the start of the path to its end. Using this idea and some elementary combinatorics, we can prove that if $|S|$ is small, the trimming procedure will delete everything, and so the root is indeed independent of these leaves. 

\paragraph{Failure of RBF Kernel Ridge Regression (Theorem~\ref{thm:kernel-intro}.)} This result builds on the low-degree polynomials result. First, we show that if the bandwidth parameter in the kernel is taken too small, then the output of Kernel Ridge Regression (KRR) is close to zero on a new test point and so it fails to learn anything. Otherwise, we can directly show that any function with a substantial high degree polynomial component has large RKHS norm.
We can also construct an interpolator of the training data which has much smaller RKHS norm, by showing that every training sample has a small ``fingerprint'' which uniquely identifies it and is detectable with a low-degree polynomial. It then follows that whatever the output of KRR is, it must have a small RKHS norm and cannot correlate with the true regression function.

\paragraph{Success of noise robust reconstruction using ``high degree'' algorithms \cite{mossel2003information}.} We briefly explain why noise robust reconstruction is possible with simple and computationally efficient algorithms. The key is to consider the case of a depth $1$ tree: because the rows of $M$ are distinct, if the degree of the tree is a sufficiently large constant, then by the Law of Large Numbers the empirical distribution of its children will be close to the row of $M$ corresponding to the state of the parent, letting us reconstruct the parent with say $99.9\%$ probability of success. Given this, it is not too hard to argue this argument works recursively and in the presence of a small adversarial noise.
Note that this algorithm recursively integrates \emph{global} information on the tree across multiple scales --- in contrast, the lower bound used the fact that low-degree polynomials can only aggregate information between  small sets of variables in a limited (linear) way. 

\paragraph{Unknown tree results (Theorem~\ref{thm:sq-tree-intro} and Theorem~\ref{thm:low-degree-failure2}).} The lower bound for low-degree polynomials in this setting can be reduced to the previous low-degree polynomial lower bound, which leaves proving that efficient (and SQ) algorithms can successfully solve this problem. Once we reconstruct the tree, we can run any algorithm for reconstructing the root given the leaves, e.g. the one described just above or BP. 
As far as reconstructing the tree, we first explain how to reconstruct the first layer. We prove that the joint distribution of any two leaves has enough information to tell us if they are immediate neighbors, which determines the location of all of their parents in the tree (such a test is easy to construct if we look at the \emph{generalized eigenvectors} of $M$). Now that the bottom layer of the tree structure is determined, we use the fact that we have very good estimates of their parents colorations using the algorithm described before. Crucially, since that algorithm's accuracy guarantee for reconstructing the internal node's colors is very strong and does not decay as we go further and further up the tree, we can indeed apply this argument recursively to get the whole tree. 

Implementing this algorithmic approach in the SQ framework is straightforward: at the end of the day it is based on computing the joint distributions of pairs of (estimated) vertex colorations, and those are all averages over the data. On the other hand, note that this method very strongly relies on the ability of an SQ algorithm to make \emph{adaptive queries}, since the queries made are based on the partially reconstructed tree structure, which is unknown to the algorithm before it starts.


\section{Organization}
In the remaining sections we give full proofs of all results; there is no dependence on the Technical Overview as all the information there will be repeated here in more detail.  
In Section~\ref{sec:known-tree} we prove the results for known trees: in particular, this includes the main lower bound result, which is the failure of low degree polynomials for recovering the root; we also show how to deduce the RBF kernel lower bound using this. This is also where the RecMaj algorithm from Figure~\ref{fig:krr} is formally explained.
In Section~\ref{sec:unknown-tree} we prove the results in the setting with an unknown tree; the main technical step is showing how to reconstruct the tree when $\lambda_2 = 0$ using a sample-efficient algorithm, which can be straightforwardly implemented in SQ. 

\section{Known Tree: Upper and Lower Bounds}\label{sec:known-tree}
In this section, we prove a lower bound for arbitrary markov chains $M$ satisfying $\lambda_2(M) = 0$. From basic linear algebra (the existence of the Jordan Normal Form 
\cite{artin}), we know that $\lambda_2(M) = 0$ if and only if $M^k$ is a rank one matrix for some $1 \le k \le q$, i.e. the Markov chain mixes perfectly in a finite number of steps. For concreteness, we give an example of such a chain with $k = 2, q = 3$ below.

\begin{example}[Proof of Proposition 5, \cite{mossel2001reconstruction}]\label{example:chain}
The following Markov chain on $q = 3$ states is a simple example of a chain with $\lambda_2(M) = 0$: we have
\[
M = 
\begin{bmatrix}
0.5 & 0 & 0.5 \\
0.25 & 0.5 & 0.25 \\
0 & 1 & 0
\end{bmatrix}, \qquad M^2 = 
\begin{bmatrix}
0.25 & 0.5 & 0.25 \\
0.25 & 0.5 & 0.25 \\
0.25 & 0.5 & 0.25 
\end{bmatrix}.
\]
\end{example}
\subsection{Failure of Low-Degree Polynomials}
\begin{theorem}\label{thm:no-MI}
Let $M$ be the transition matrix of a Markov chain on $[q]$ and suppose that $1 \le k \le q$ is such that $M^k$ is a rank-one matrix.
Let $S$ be any subset of the leaves of the depth-$\ell$ complete $d$-ary tree $T = (V,E,\rho)$ with root $\rho$ and let $(X_v)_{v \in V}$ denote the broadcast process on $T$ with channel $M$.
Let $S$ be an arbitrary subset of the leaf nodes of this tree.
If $|S| < 2^{\lfloor \ell/(k - 1) \rfloor}$, then 
$I(X_{\rho};X_S) = 0$,
i.e. $X_S$ is independent of the root value $X_{\rho}$.
\end{theorem}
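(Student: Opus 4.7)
The plan is to prove this by induction on the depth $\ell$, with a case analysis based on the least common ancestor (LCA) $v^*$ of the leaves in $S$. The key algebraic input is that since $M^k$ has rank one, all its rows must coincide and equal the stationary distribution $\pi_M$; multiplying on the left by $M^{m-k}$ and using that $M^k$ absorbs subsequent multiplications (all its rows already equal $\pi_M$) shows $M^m$ has all rows equal to $\pi_M$ for every $m \ge k$. Consequently, for any vertex $v$ at depth at least $k$ from $\rho$, the conditional law $\mathcal{L}(X_v \mid X_\rho = c)$ equals $\pi_M$ regardless of $c$, so $X_v$ is independent of $X_\rho$.

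With that in hand, I would split into two cases. In the first case the LCA $v^*$ has depth at least $k$; since all of $S$ lies in the subtree below $v^*$, the tree's Markov property gives the Markov chain $X_\rho \to X_{v^*} \to X_S$, and combining with $X_{v^*} \perp X_\rho$ immediately yields $X_S \perp X_\rho$. In the second case $v^*$ has depth $j < k$, so $S$ splits across at least two children $u_1, \ldots, u_c$ of $v^*$ with nonempty restrictions $S_i = S \cap \{\text{leaves below } u_i\}$. Applying the inductive hypothesis inside the depth-$(\ell - j - 1)$ subtree rooted at each $u_i$ gives $X_{S_i} \perp X_{u_i}$; then, using that distinct child-subtrees are conditionally independent given $X_{v^*}$,
\[
P(X_S \mid X_{v^*}) \;=\; \prod_i \sum_{x} M_{X_{v^*}, x}\, P(X_{S_i} \mid X_{u_i} = x) \;=\; \prod_i P(X_{S_i}),
\]
which is independent of $X_{v^*}$. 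Hence $X_S \perp X_{v^*}$, and composing with $X_\rho \to X_{v^*} \to X_S$ gives $X_S \perp X_\rho$.

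The main obstacle is the combinatorial accounting needed to match the doubling rate $2^{\lfloor \ell/(k-1) \rfloor}$. In case B the naive split only guarantees $|S_i| \le |S| - 1$, which is too weak on its own to sustain the doubling. I plan to close this gap by recursing into the \emph{heavy} child $u_{i^*}$ whenever $S$ is concentrated there: the small pieces $S_i$ with $i \ne i^*$ already satisfy a stronger form of the inductive hypothesis, so their contributions to the product above become constants and factor out, reducing the problem to showing $X_{S_{i^*}} \perp X_\rho$ with strictly smaller $|S_{i^*}|$. Iterating this descent along the heavy path, a chain of at most $k - 1$ linear descents either reaches a genuine branching --- which can absorb a doubling of the size budget, because two sub-subsets of size strictly less than half now live in deeper subtrees --- or triggers case A via the rank-one property of $M^k$. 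Setting up the induction hypothesis so that this accounting closes cleanly, in particular tracking the interplay between the remaining depth and the sub-branching structure of $S$, is the delicate step I anticipate.
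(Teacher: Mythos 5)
Your plan is viable and genuinely different from the paper's argument, but the step you yourself flag as delicate is a real gap as written, and your case split cannot deliver the stated exponent. For comparison: the paper argues by contradiction on the spanning subtree $T_S$ of $\{\rho\}\cup S$. Whenever $T_S$ contains a directed path of $k$ edges whose interior vertices have degree $2$ in $T_S$, the channel across that path is $M^k$ (rank one), so deleting from $S$ all leaves below the bottom of that path preserves $I(X_\rho;X_S)$; iterating reduces to a set whose spanning subtree branches within every $k-1$ levels, and a single global counting argument then forces $|S|\ge 2^{\lfloor \ell/(k-1)\rfloor}$. Your top-down LCA recursion uses the same two ingredients (rank-one collapse of branch-free paths, forced branching), but it threads the counting through the induction, and that is exactly where the bookkeeping bites.

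Two concrete comments on closing your induction. First, the heavy-path iteration is more complicated than necessary: run a strong induction on $|S|$, over all depths simultaneously. In Case B every piece other than the heaviest satisfies $|S_i|\le |S|/2$, so (subject to the budget inequality below) the inductive hypothesis at depth $\ell-j-1$ applies to it and it factors out of $P(X_S\mid X_{v^*})$ as a constant; the heaviest piece $S_{i^*}$ satisfies $|S_{i^*}|<|S|$ and still lies below $\rho$, so the inductive hypothesis at the \emph{same} depth $\ell$ but strictly smaller cardinality gives $X_{S_{i^*}}\perp X_\rho$ outright. No descent along a path, and no absorbing of a doubling at the next branching, is needed. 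Second, the budget inequality: writing $B_\ell$ for your size threshold, you need $B_\ell/2\le B_{\ell-j-1}$ for every admissible $j$, since the LCA can sit as deep as $j=k-1$ before your Case A applies, putting the children at depth $\ell-j-1 = \ell-k$. With $B_\ell=2^{\lfloor\ell/(k-1)\rfloor}$ this fails at $j=k-1$; it holds with $B_\ell=2^{\lfloor \ell/k\rfloor}$, so your recursion as structured proves the theorem with $k$ in place of $k-1$ in the exponent (a doubling only every $k$ levels). Recovering $k-1$ requires cutting as soon as the next branching point is at distance $\ge k$ from the \emph{previous branching point}, counting the edge $v^*\to u_i$, rather than at depth $\ge k$ inside the child's subtree; that forces a modified induction statement for subtrees carrying the edge above them. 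The paper's global count sidesteps all of this. For the intended application either exponent is equally good, and the paper's own constant is itself off by one near the top of the tree (with $k\ge 2$ minimal, take $\ell=k-1$ and $|S|=1$: then $M^{k-1}$ is not rank one so $I(X_\rho;X_S)>0$, yet $1<2^{\lfloor\ell/(k-1)\rfloor}=2$), so I would not contort the induction to match it exactly.
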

\begin{proof}
Assume for contradiction that  $|S| < 2^{\lfloor \ell/(k - 1) \rfloor}$ and $I(X_{\rho};X_S) > 0$. 
Let $T_S$ be the minimal spanning subtree of $T$ containing the root node $\rho$ and all of the elements of $S$. (Equivalently, $T_S$ is the union of all of the root-to-leaf paths to $S$.)

Recall that in our convention, the edges of the tree $T$ are directed from the parent to the child. 
We say that $T_S$ contains an \emph{isolated} length $k$ directed path if there exists adjacent nodes $u_0,\ldots,u_k$ contained in $T$ with $(u_i,u_{i + 1}) \in E$ for all $0 \le i < k$, and such that nodes $u_1,\ldots,u_{k - 1}$ all have degree $2$ in $T_S$.

We show that we can reduce to the case where $T_S$ contains no isolated length $k$ directed paths. Otherwise, let $u_0,\ldots,u_k$ be as defined above and let $S_{u_k}$ be the subset of $S$ consisting of descendants of $u_k$ (note that by the definition of $T_S$, $S_{u_k}$ is nonempty). Observe that
\[ I(X_{\rho};X_S) \le I(X_{\rho}; X_S,X_{u_k}) = I(X_{\rho}; X_{S \setminus S_{u_k}},X_{u_k}) \]
where the last equality follows by the Markov property (all nodes in $S_{u_k}$ are descendants of $u_k$, so $X_{S_{u_k}}$ is independent of the root value $X_{\rho}$ conditionally on $X_{S \setminus S_u}, X_{u_k}$).

Next, by the chain rule for mutual information
\[ I(X_{\rho}; X_{S \setminus S_{u_k}},X_{u_k}) = I(X_{\rho}; X_{S \setminus S_{u_k}}) + I(X_{\rho}; X_{u_k} \mid X_{S \setminus S_{u_k}}) = I((X_{\rho}; X_{S \setminus S_{u_k}}) \]
where the last equality follows from the fact that 
\[ I(X_{\rho}; X_{u_k} \mid X_{S \setminus S_{u_k}}) \le I(X_{u_0},X_{\rho}; X_{u_k}  \mid X_{S \setminus S_{u_k}}) = I(X_{u_0} ; X_{u_k}\mid X_{S \setminus S_{u_k}}) = 0  \]
where in turn the first equality follows from the Markov property ($X_{\rho}$ is independent of $X_{u_k}$ conditional on $X_{u_0}$ and $X_{S \setminus S_{u_k}}$) and the second equality follows because by the Markov property,
\[ X_{S \setminus S_{u_k}} \to X_{u_0} \to X_{u_k} \]
is a Markov chain where the rightmost channel has transition matrix $M^k$, a rank-one matrix, so the conditional law of $X_{u_k}$ is the stationary measure of $M$ regardless of the value of $X_{u_0}$, hence $X_{u_k}$ is conditionally independent of $X_{u_0}$. Combining the above claims shows that
\[ I(X_{\rho}; X_S) \le I(X_{\rho}; X_{S \setminus S_{u_k}}) \]
where $|S \setminus S_{u_k}| < |S|$; by monotonicity of mutual information we in fact have
\[  I(X_{\rho}; X_S) = I(X_{\rho}; X_{S \setminus S_{u_k}}). \]
Repeating this argument recursively reduces to the case where $T_S$ has no isolated length $k$ paths.

Finally, if $T_S$ has no isolated length $k$ paths then every internal node of $T_S$ is either: (a) at depth at most $k - 1$, or (b) has an ancestor at graph distance at most $k - 1$ away with degree at least $3$. By induction, this implies that the number of nodes at depth $\ell'$ in $T_S$ is at least twice as large as the number of nodes at depth $\ell' - (k - 1)$. Since $S$ is the set of nodes in $T_S$ at depth $\ell$, this implies that
\[ |S| \ge 2^{\lfloor \ell/(k - 1) \rfloor} \]
which completes our proof by contradiction. 
\end{proof}
\begin{corollary}\label{corr:poly-fail}
In the setting of the previous Theorem, for any function $f : [q]^L \to \mathbb{R}$ of Efron-Stein degree 
at most $2^{\lfloor \ell/(k - 1) \rfloor}$ of the leaves $X_L$ and any prior $\nu$ on the root,
\[ \E[f(X_L) \cdot (\bone(X_{\rho} = c) - \nu(c))] = 0. \]
\end{corollary}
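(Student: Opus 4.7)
The plan is to reduce the claim to Theorem~\ref{thm:no-MI} via the Efron-Stein decomposition. By the definition of Efron-Stein degree recalled in the preliminaries, if $f$ has degree at most $D := 2^{\lfloor \ell/(k-1)\rfloor}$ (with the strict inequality $|S| < D$ when this is what is actually needed to match Theorem~\ref{thm:no-MI}), then we can write
\[ f(x_L) = \sum_{S \subset L,\; |S| \le D - 1} f_S(x_S) \]
for some functions $f_S : [q]^{|S|} \to \mathbb{R}$. By linearity of expectation, it therefore suffices to prove that for every fixed $S$ appearing in this sum and every $c \in [q]$,
\[ \E[f_S(X_S) \cdot (\bone(X_\rho = c) - \nu(c))] = 0. \]

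The second step is to invoke Theorem~\ref{thm:no-MI}, which gives $I(X_\rho; X_S) = 0$, i.e.\ $X_S$ is independent of $X_\rho$. A small subtlety to flag is that Theorem~\ref{thm:no-MI} is naturally read under the uniform prior on the root, while here we must handle an arbitrary prior $\nu$. This is not a real obstacle: independence of $X_S$ from $X_\rho$ is equivalent to the conditional law $\mathcal{L}(X_S \mid X_\rho = c')$ being the same for every $c' \in [q]$, and this conditional law is determined by the transition matrix $M$ and the tree structure alone, not by the prior on $X_\rho$. Consequently the independence established under the uniform prior transfers verbatim to $\mu_{\ell,\nu}$ for any choice of $\nu$.

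Given independence, we have $\E[f_S(X_S)\bone(X_\rho = c)] = \E[f_S(X_S)] \cdot \Pr(X_\rho = c) = \E[f_S(X_S)] \cdot \nu(c)$, so each term in the Efron-Stein decomposition contributes zero to $\E[f(X_L)(\bone(X_\rho = c) - \nu(c))]$. Summing over $S$ yields the conclusion. The only step requiring any genuine care is the prior-independence remark above; the rest is routine linearity and an appeal to the previous theorem.
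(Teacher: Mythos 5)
Your proposal is correct and follows essentially the same route as the paper: Efron--Stein decomposition, linearity, and the independence $I(X_\rho;X_S)=0$ from the previous theorem. You are in fact slightly more careful than the paper on two minor points --- the transfer of independence from the uniform prior to an arbitrary $\nu$, and the off-by-one between ``degree at most $2^{\lfloor \ell/(k-1)\rfloor}$'' in the corollary and the strict inequality $|S| < 2^{\lfloor \ell/(k-1)\rfloor}$ required by the theorem --- both of which you handle appropriately.
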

\begin{proof}
By linearity of expectation and the Efron-Stein decomposition,
\[ \E[f(X_L) X_{\rho}] = \sum_{S \subset L, |S| \le D} \E[f_S(X_L) \cdot (\bone(X_{\rho} = c) - \nu(c))] \rangle] = 0 \]
where the last equality used the previous Theorem and the fact $\E[\bone(X_{\rho} = c)] = \nu(c)$. 
\end{proof}
\subsubsection{A consequence: failure of RBF kernel regression with oracle tuning}\label{sec:kernel}
\paragraph{Setting and notation.} We consider the performance of RBF kernel ridge regression (with arbitrary/oracle hyperparameter selection) for predicting the color of the root given the color of the leaves. As is customary, we encode the leaf vectors using a one-hot encoding, so the input to the regression is a list of i.i.d. samples $(x_i,y_i)_{i = 1}^m$ where $x_i$ is the vector of one-hot encoded leaves, i.e. $(x_i)_{\ell,c} = \bone(X_{\ell} = c)$, and for an arbitrary fixed color $c$, $y_i := \bone(X_{\rho} = c) - \nu(c)$ is the centered indicator that the root is colored $c$.

\paragraph{Background on Kernel Ridge Regression.} We remind the reader of some standard facts about kernel ridge regression and the Gaussian/RBF kernel --- see \cite{shalev2014understanding} for a reference. Given a kernel $K(x,x')$, training points $x_1,\ldots,x_m$, and responses $y = (y_1,\ldots,y_m)$, the kernel ridge regressor with ridge parameter $\lambda$ is given by solving a linear equation 
\[ v = (\mathsf K + \lambda I)^{-1} y \]
where $\mathsf K_{ij} = K(x_i,x_j)$ is the kernel matrix, and the predicted response for a fresh data point $x_0$ is given by
\[ \hat y_0 := \sum_{i = 1}^n v_i K(x_i, x_0). \]
As is well-known, kernel ridge regression with ridge parameter $\lambda$ is equivalent to solving the  ridge regression problem 
\begin{equation}\label{eqn:ridge-regression}
\argmin_w \sum_{i = 1}^n (y_i - \langle w, \varphi(x_i))^2 + \lambda \|w\|_2^2 
\end{equation} with feature vectors $\varphi(x)$ lying in a certain Hilbert space. Note that with this parameterization the prediction for fresh data point $x_0$ would just be $\hat{y}_0 = \langle w, \varphi(x_0) \rangle$ since $w$ is in the Hilbert space.  In the case of the RBF kernel $K(x,y) = e^{-\|x - y\|^2_2/2\sigma^2}$, the corresponding feature map for $x \in \mathbb{R}^d$ is
\begin{equation}\label{eqn:rbf-map}
\varphi(x) = e^{-\|x\|^2/2\sigma^2} \left( \frac{1}{\sigma^{2(n_1 + \cdots + n_d)}} \frac{x_1^{n_1} \cdots x_d^{n_d}}{\sqrt{n_1! \cdots n_d!}} \right)_{n_1,\ldots,n_d \ge 0} 
\end{equation}
so that $K(x,y) = \langle \varphi(x), \varphi(y) \rangle$. Note that $\|\varphi(x)\| = 1$ since $K(x,x) = e^{0} = 1$.

\paragraph{Proof of the lower bound.}
We now proceed to prove the subexponential RBF sample complexity lower bound in our setting. 
For $\psi$ an element of the RKHS, define the orthogonal projection operator onto the space of degree $J$ and higher polynomials $P_{\ge J}$ by
\[ \left(P_{\ge J} \psi\right)_{n_1,\ldots,n_d} := \begin{cases}
0 & \text{if $n_1 + \cdots + n_d < J$} \\
\psi_{n_1,\ldots,n_d} & \text{otherwise}
\end{cases}.
\]
From the definition, we first show that for large degree $J$ and bandwidth $\sigma$ not too tiny, $P_{\ge J}$ is very contractive when operating on feature embeddings $\varphi(x)$.
\begin{lemma}\label{lem:high-degree-contraction}
For any $x \in \mathbb{R}^d$ and $\varphi(x)$ as defined in \eqref{eqn:rbf-map} with bandwidth parameter $\sigma > 0$,
\[ \|P_{\ge J} \varphi(x)\|^2 \le \frac{1}{\sqrt{J}} \left(\frac{e\|x\|^2}{J \sigma^2}\right)^J \]
\end{lemma}
\begin{proof}
First observe that
\[ \frac{(\|x\|^2/\sigma^2)^j}{j!} = \sum_{n_1 + \cdots + n_d = j} \frac{1}{n_1! \cdots n_d! (\sigma^2)^{2j}} x_1^{2n_1} \cdots x_d^{2n_d} \]
by applying the multinomial theorem. Therefore,
\[ \|P_{\ge J} \varphi(x)\|^2 = e^{-\|x\|^2/\sigma^2} \sum_{j = J}^{\infty} \frac{\|x\|^{2j}}{\sigma^{2j} j!} = e^{-\|x\|^2/\sigma^2} \frac{\|x\|^{2J}}{\sigma^{2J} J!} \sum_{j = 0}^{\infty}  \frac{\|x\|^{2j} J!}{\sigma^{2j} (J + j)!} \le \frac{\|x\|^{2J}}{\sigma^{2J} J!} \]
and then the stated result follows from a nonasymptotic version of Stirling's approximation. 
\end{proof}
Next, we prove that there exists a relatively low-degree and low-RKHS norm polynomial which perfectly interpolates the training data, by showing that with high probability every sample has a small and unique ``fingerprint'' given by looking at a small set of well-separated leaves.
\begin{lemma}\label{lem:fingerprint}
Let $M$ be the transition matrix of a markov chain on $[q]$ and suppose that $1 \le k \le q$ is such that $M^k = \pi \pi^T$ is a rank-one matrix, and suppose that $\pi$ has at least two nonzero entries. Then if $S$ is a set of leaves of distance at least $2k$ from each other and $X_1,\ldots,X_m$ are i.i.d. random vectors generated by the broadcast process with transition matrix $M$, the probability that there exists $i,j \in [m]$ such that $(X_i)_S = (X_j)_S$ is at most
${m \choose 2}\delta^{|S|}$
where $\delta = \delta(M) \in (0,1)$ is a constant depending only on $M$. 
\end{lemma}
\begin{proof}
First, let $X,X'$ be independent samples of the leaves from the generative model and let $c = c(M) > 0$ be such that the stationary distribution $\pi$ has at least two entries of size at least $c$. For $S$ a set of leaves of distance at least $2k$ from each other, we have by the Markov property that the entries of $X_S$ are independent from each other conditional on the values of the markov process $X_v$ for all vertices $v$ at height $k$ above the leaves; we see then that the conditional law of the leaves $X_S$ is $\pi^{\otimes S}$ which does not depend on $X_v$, so in fact the leaves $X_S$ are unconditionally distributed according to the product measure $\pi^{\otimes S}$. Then by independence,
\[ \Pr(X_S = X'_S) = \prod_{i \in S}\Pr(X_i = X'_i) \le (1 - c)^{|S|} \]
where in the last step we used that regardless of the value of $X_i$, $X'_i$ has a probability at least $c$ of being different from it. 
\end{proof}
\begin{lemma}\label{lem:indicator-to-rhs}
For $x \in \{0,1\}^d$ with $\sum_i x_i = p$, and $S \subseteq [d]$ and $b_S \in \{0,1\}^d$ arbitrary, there exists $w = w(p,S)$ of (Hilbert space) norm 
\[ \|w\|^2 \le 2^{|S|} e^{p/2\sigma^2} \max\left\{1,\sigma^{2|S|}\right\} \sqrt{|S|!}  \]
such that
\[ \langle w, \varphi(x) \rangle = 1(x_S = b_S). \]
\end{lemma}
\begin{proof}
Observe that
\[ 1(x_S = b_S) = \prod_{i \in S} [b_ix_i + (1 - b_i)(1 - x_i)] \]
which for fixed $b$, expands into a sum of at most $2^{|S|}$ many monomials of degree at most $|S|$ and with coefficient $1$. Representing this expanded polynomial in the RKHS, using \eqref{eqn:rbf-map}, then leads to the stated norm bound. 
\end{proof}
We show that the overlap between two independent samples of the leaves from the model concentrates exponentially with a subgaussian tail:
\begin{lemma}\label{lem:overlap-concentration}
Let $M$ be the transition matrix of a markov chain on $[q]$ and suppose that $1 \le k \le q$ is such that $M^k = \pi \pi^T$ is a rank-one matrix. Then if $X_L,X'_L$ are two independent random vectors of leaf colorations generated by the broadcast process on the $d$-ary tree with $N = |L|$ leaves and $x_L,x'_L$ are the corresponding one-hot encodings, we have that
\[ \Pr\left(\left|\frac{1}{N} \langle x_L, x'_L \rangle - \|\pi\|_2^2\right| >  t\right) \le 2e^{-cNt^2} \]
where $c = c(M,d) > 0$ is a constant not depending on $N$.
\end{lemma}
\begin{proof}
First, observe that if $N$ is smaller than $d^k$, this bound can be proved trivially by shrinking $c$, so henceforth we assume $N$ is larger than this. By the law of total probability, it is sufficient to prove the desired bound conditional on the colors $X_V,X'_V$ where $V$ is the set of vertices at height $k$ above the leaves, and similar to the proof of Lemma~\ref{lem:fingerprint} we observe by the Markov property that this makes the color of the set of children of any particular $v \in V$ independent of the colors of all non-children of $v$. This means that $\langle x_L, x'_L \rangle$ a sum of bounded independent random variables, and because $M^k = \pi \pi^T$ we have that its expectation is $\|\pi\|_2^2 N$, so the result follows immediately from Hoeffding's inequality \cite{vershynin2018high}.
\end{proof}
\begin{theorem}\label{thm:kernel}
Let $M$ be the transition matrix of a markov chain on $[q]$ and suppose that $1 \le k \le q$ is such that $M^k = \pi \pi^T$ is a rank-one matrix, and suppose that $\pi$ has at least two nonzero entries.
Suppose that $m/\delta \le e^{c N^{\epsilon}}$. Then given $m$ i.i.d. samples $(x_1,y_1),\ldots,(x_m,y_m)$ from the broadcast model on the $d$-ary tree with $N$ leaves and broadcast channel $M$, we have that for any bandwidth $\sigma \ge 0$ and ridge parameter $\lambda \ge 0$, for $w$ the output of ridge regression in RKHS space with those parameters, that with probability at least $1 - \delta$
\[ \frac{\mathbb{E}_{x_0,y_0}[y_0 \langle w, \varphi(x_0) \rangle]}{\sqrt{\mathbb{E}_{x_0,y_0}[y_0^2]}} = O(\sqrt{1/N}) \]
provided that $m/\delta = O(e^{N^{\epsilon}})$ where $\epsilon = \epsilon(M,d) > 0$ is independent of $N$ (equivalently, independent of the depth of the tree). 
\end{theorem}
\begin{proof}
As usual, we will use that $N$ can be assumed larger than a fixed absolute constant without loss of generality.
The proof is via case analysis on the bandwidth parameter $\sigma$. 

First we make an argument which covers the case of small bandwidth parameter $\sigma$. Note that for any $i$, $\|x_i\|^2 = N$ almost surely since there are $N$ leaves and each leaf is one-hot encoded. 
By Lemma~\ref{lem:overlap-concentration} and the union bound, with probability at least $1 - \delta/4$ for any $i \ne j$ in $[m]$ we have
\[ \|x_i - x_j\|_2^2 = 2N - 2\langle x_i, x_j \rangle \ge 2(1 - \|\pi\|_2^2)N - O_{M,d}(\sqrt{N\log(m/\delta)}) \]
so 
\[ \mathsf K_{ij} = e^{-\|x_i - x_j\|_2^2/2\sigma^2} \le \exp\left([-(1 - \|\pi\|_2^2)N + O_{M,d}(\sqrt{N\log(m/\delta)})]/\sigma^2\right). \]
It follows that there exists 
$c_2 = c_2(M,d) > 0$ such that 
if $\sigma \le c_2N^{1/2 - \epsilon/4}$, then $(\mathsf K)_{ij} \le e^{-N^{\epsilon/3}}$ for $i \ne j$ and so by Gershgorin's disk theorem and the fact that the diagonal of $\mathsf K$ is all-ones, $\|\mathsf K - I\|_{OP} \le 1/N$. Hence for the Kernel Ridge solution $v = (\mathsf K + \lambda I)^{-1} y$ we have $\|v\| \le 2\|y\| \le 2\sqrt{m}$. 

Consider a fresh test set of independently sampled pairs of leaf and root colorations $(x'_1,y'_1),\ldots,(x'_{ms},y'_{ms})$ where $s := N\log(2/\delta)$. Observe by Hoeffding's inequality that with probability at least $1 - \delta/4$,
\[ \left|\frac{1}{ms} \sum_{i = 1}^{ms} \left(\sum_{j = 1}^m v_j K(x_j,x_0)\right)^2 - \mathbb{E}_{x_0}\left[\left(\sum_{j = 1}^m v_j K(x_j,x_0)\right)^2\right] \right| = O(\sqrt{\log(2/\delta)/s})  \]
where $x_0$ is a fresh one-hot encoded vector of leaf colorations sampled from the same distribution and
where we used the fact that $\|v\| \le 2\sqrt{m}$ and $K(\cdot,\cdot) \le 1$ to show that over the randomness of $x_0$, $\left|\sum_{j = 1}^m v_j K(x_j,x_0)\right| \le 2\sqrt{m}$ almost surely, which we used in order to apply Hoeffding's inequality.
By repeating the argument used to show the off-diagonal entries of $\mathsf K$ are small, we have with probability at least $1 - \delta/4$
\[ \frac{1}{ms} \sum_{i = 1}^{ms} \left(\sum_{j = 1}^m v_i K(x_j,x_i)\right)^2 \le m e^{-N^{\epsilon/2}}, \]
hence by the triangle inequality we have with probability at least $1 - \delta$ that \[ \E_{x_0}\left[\left(\sum_{i = 1}^m v_i K(x_i,x_0)\right)^2\right] \le me^{-N^{\epsilon/2}} + O(\sqrt{\log(2/\delta)/s}) \] 
and recalling $s = N\log(2/\delta)$ gives the result in this case. 

Now we cover the remaining set of bandwidth parameters where $\sigma > c_2 N^{1/2 - \epsilon/4}$. By the combination of Lemma~\ref{lem:fingerprint} applied with $|S| = C_{M} \log (m/\delta)$ and Lemma~\ref{lem:indicator-to-rhs}, we have that there exists $w$ such that for every $x_i$
\[ \langle w, \varphi(x_i) \rangle = y_i \]
and
\begin{equation}\label{eqn:norm-bound}
\|w\| \le (m/\delta)^{C'_M} e^{N/4\sigma^2} \sigma^{C_M \log m/\delta} \sqrt{(C_M \log m/\delta)!}. 
\end{equation}
It follows that the output of KRR with any ridge parameter $\lambda \ge 0$ has norm at most the rhs of \eqref{eqn:norm-bound} (otherwise, replacing the output with $w$ would shrink the norm without decreasing the training error in \eqref{eqn:ridge-regression}). Next, by Lemma~\ref{lem:high-degree-contraction} we have that for any $x$ and degree $J$
\begin{align*}
\langle P_{\ge J} w, \varphi(x) \rangle 
&= \langle w, P_{\ge J} \varphi(x) \rangle \\
&\le \|w\| \|P_{\ge J} \varphi(x)\| \\
&\le \|w\| \frac{1}{\sqrt{J}} \left(\frac{e\|x\|^2}{J \sigma^2}\right)^J 
=\|w\| \frac{1}{\sqrt{J}} \left(\frac{e N}{J \sigma^{2}}\right)^J
\end{align*}
so taking as in Corollary~\ref{corr:poly-fail} $J = 2^{\lfloor \ell/(k - 1) \rfloor} = N^{\epsilon}$ where this equation defines $\epsilon$ and using that 
\[ N/J\sigma^2 = N^{1-\epsilon}/\sigma^2 = O(\sigma^{-\epsilon/(1 - \epsilon/2)}), \]
we have that
for any $w$ satisfying \eqref{eqn:norm-bound}, 
\begin{align*} 
|\langle P_{\ge J} w, \varphi(x) \rangle|
&\le \|w\| (c_3/\sigma^{\epsilon/(1 - \epsilon/2)})^{N^{\epsilon}} \\
&\le (m/\delta)^{C'_M} e^{N^{\epsilon/2}/4c_2} \sigma^{C_M \log m/\delta} \sqrt{(C_M \log m/\delta)!} (c_3/\sigma^{\epsilon/(1 - \epsilon/2)})^{N^{\epsilon}} = O((1/\sigma)^{N^{\epsilon}/2}).
\end{align*}
Since by Corollary~\ref{corr:poly-fail} and Cauchy-Schwarz we have that
\[ \mathbb{E}_{x_0,y_0}[y_0 \langle w, \varphi(x_0) \rangle] = \mathbb{E}_{x_0,y_0}[y_0 \langle P_{\ge J} w, \varphi(x_0) \rangle] \le \sqrt{\mathbb{E}_{x_0,y_0}[y_0^2]} \sqrt{\mathbb{E}_{x_0,y_0}[ \langle P_{\ge J} w, \varphi(x_0) \rangle^2]}\]
combining this with the bound on $|\langle P_{\ge J} w, \varphi(x) \rangle|$ completes the proof.
\end{proof}

\subsection{Success of noise-robust reconstruction using non-low-degree algorithms}
Above we saw that when $|\lambda_2(M)| = 0$, very high degree polynomials are needed to get any estimate correlated with the root. Nevertheless, for ``most'' matrices $M$ with $|\lambda_2(M)| = 0$ and for degree $d$ sufficiently large as a function $M$ there exists a simple recursive and noise-robust method which witnesses the fact that reconstructing the root is possible. If one likes, this recursive function can trivially be expressed as a polynomial: then it will be a very high-degree polynomial that is nonetheless robust to noise. 

The reason for the qualifier ``most'' in the discussion above is that there are some degenerate $M$ for which the task is clearly impossible: e.g. if $M$ is rank one (so it does not depend on its input). There are other similar examples, e.g. the chain on 3 states which deterministically transitions from state $1$ to state $2$, and such that at states $2$ and $3$ the chain flips a fair coin to transition to either state $2$ or $3$. With this clarified, we can now state the known positive result for reconstruction. 

\begin{theorem}[Theorem 6.1 of \cite{mossel2004survey}]
Suppose $M$ is a the transition matrix of a Markov chain with pairwise distinct rows, i.e. for all $i,j \in [q]$ the rows $M_i$ and $M_j$ are distinct vectors. Then there exists $d_0 = d_0(M)$ such that for all $d \ge d_0$, reconstruction is possible on the $d$-ary tree. 
\end{theorem}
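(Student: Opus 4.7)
The plan is to build a simple recursive estimator $\hat{X}_v$ that depends only on the values of the leaves in the subtree rooted at $v$, and to show by induction that for $d$ large enough (depending on $M$) the conditional error probability $\epsilon_\ell := \max_{i \in [q]} \Pr(\hat X_v \ne i \mid X_v = i)$ at a node $v$ of height $\ell$ is uniformly bounded by a constant strictly less than $1/2$. Nontrivial reconstruction then follows from the data-processing inequality: for any $c \ne c'$,
\[
d_{TV}\bigl(\mathcal{L}_{\mu_\ell}(X_L \mid X_\rho = c),\, \mathcal{L}_{\mu_\ell}(X_L \mid X_\rho = c')\bigr) \;\ge\; d_{TV}\bigl(\mathcal{L}(\hat X_\rho \mid X_\rho = c),\, \mathcal{L}(\hat X_\rho \mid X_\rho = c')\bigr) \;\ge\; 1 - 2\epsilon_\ell,
\]
which stays bounded away from zero uniformly in $\ell$.

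The estimator is: $\hat X_v = X_v$ at leaves, and at an internal node $v$ with children $u_1, \ldots, u_d$ we form the empirical distribution $\hat p := \tfrac{1}{d}\sum_{j=1}^d \delta_{\hat X_{u_j}}$ and set $\hat X_v := \argmin_{i \in [q]} d_{TV}(\hat p, M_i)$, where $M_i$ denotes the $i$th row of $M$ viewed as a distribution on $[q]$. By the Markov property and the symmetry of the complete $d$-ary tree, conditional on $X_v = i$ the children's estimates are i.i.d.\ from $p_i := (M Q^{(\ell-1)})_i$, where $Q^{(\ell-1)}_{ij} := \Pr(\hat X_u = j \mid X_u = i)$ for any node $u$ at height $\ell - 1$. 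The core inductive claim is $\epsilon_\ell \le \gamma_0/6$ for all $\ell \ge 0$, where $\gamma_0 := \min_{i \ne j} d_{TV}(M_i, M_j) > 0$ by the distinct-rows hypothesis. The base case $\ell = 0$ is trivial. For the inductive step, convexity of $d_{TV}$ gives $d_{TV}(p_i, M_i) \le \max_k d_{TV}(Q^{(\ell-1)}_k, \delta_k) \le \epsilon_{\ell-1} \le \gamma_0/6$; by the triangle inequality, a decoder error at $v$ conditional on $X_v = i$ forces $d_{TV}(\hat p, p_i) > \gamma_0/3$; and a coordinate-wise Hoeffding bound yields $\Pr(d_{TV}(\hat p, p_i) > \gamma_0/3) \le 2q \exp(-c\, d\, \gamma_0^2/q^2)$ for an absolute constant $c > 0$. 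Choosing $d_0(M)$ large enough that the last quantity is at most $\gamma_0/6$ closes the induction.

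The main obstacle is the preservation of row-separation across depths. A naive scheme (e.g.\ plurality over the children's estimates when $q > 2$) can collapse distinct rows of the effective channel $MQ^{(\ell)}$ after a few levels, driving $\min_{i \ne j} d_{TV}((MQ^{(\ell)})_i, (MQ^{(\ell)})_j)$ to zero and breaking the induction. Rounding $\hat p$ to the nearest row of the \emph{original} matrix $M$ sidesteps this by injecting the intrinsic separation $\gamma_0$ into every decision, so that the invariant $d_{TV}(p_i, M_i) \le \epsilon_{\ell-1}$ is self-correcting rather than compounding as $\ell$ grows. All remaining steps (the convexity bound, the Hoeffding concentration, and the final data-processing inequality) are routine once this invariant is in place.
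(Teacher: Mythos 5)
Your argument is correct and is essentially the paper's own approach (which it only sketches, deferring to Theorem 6.1 of Mossel's survey and Theorem 2.1 of Mossel--Peres): reconstruct recursively, use concentration of the empirical distribution of the children's values around the row of $M$ indexed by the parent's state, exploit the distinct-rows hypothesis for a fixed separation $\gamma_0$, and absorb reconstruction errors from lower levels as a small perturbation of the effective channel. Your write-up just makes the paper's ``model lower-level errors as a small amount of noise'' step quantitative via the invariant $\epsilon_\ell \le \gamma_0/6$ and convexity of total variation, which is exactly how the cited proofs proceed.
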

A variant of the condition in this Theorem gives a tight characterization of Markov chains where reconstruction is possible on the infinite $d$-ary tree for sufficiently large $d$, see Theorem 2.1 of \cite{mossel2003information}. 

By revisiting the proof of Theorem, we get the following slightly more precise result which we will use in later sections. This result shows that for any desired accuracy $\delta$, for sufficiently large degrees $d$ there exists a noise-tolerant estimator $f$ which reconstructs the root correctly with probability at least $1 - \delta$ uniformly of the color of the root.
\begin{theorem}[Proof of Theorem 2.1 of \cite{mossel2003information}]\label{thm:noise-robust-reconstruction}
Suppose $M$ is a the transition matrix of a Markov chain with pairwise distinct rows, i.e. for all $i,j \in [q]$ the rows $M_i$ and $M_j$ are distinct vectors.
Let $\delta \in (0,1)$ be arbitrary. There exists $d_0 = d_0(M,\delta),\epsilon > 0$ such that for all $d \ge d_0$, $\epsilon$-noisy reconstruction is possible on the $d$-ary tree and furthermore there exists a polynomial-time computable function $f = f_{M,\ell}$ valued in $[q]$ such that
\[ \max_{c \in [q]} \Pr(f(X'_L) \ne X_{\rho} \mid X_{\rho} = c) < \delta \]
where $X'_L$ is the $\epsilon$-noisy version of $X_L$ (see Definition~\ref{def:noisy-reconstruction}).
\end{theorem}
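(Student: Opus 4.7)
The plan is to construct $f$ as a bottom-up recursive estimator on the $d$-ary tree and analyze it by induction on depth. Let $\gamma := (1/2)\min_{i \neq j}\|M_i - M_j\|_\infty$, which is strictly positive since the rows of $M$ are pairwise distinct. Fix $\eta := \min(\delta, \gamma/8)$ and choose $\epsilon < \eta$; the value $d_0 = d_0(M,\delta)$ is selected below. At each leaf $v$, set $\hat{X}_v := X'_v$, the noisy observation, so $\Pr(\hat{X}_v \neq c \mid X_v = c) \leq \epsilon < \eta$ uniformly in $c$. At an internal node $u$ with children $v_1,\ldots,v_d$, given the recursive estimates $\hat{X}_{v_1},\ldots,\hat{X}_{v_d}$, form the empirical distribution $\hat{p} \in \R^q$ with $\hat{p}(c') := (1/d)\sum_{i=1}^d \bone(\hat{X}_{v_i} = c')$ and output $\hat{X}_u := \argmin_{c \in [q]} \|\hat{p} - M_c\|_\infty$. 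This procedure is polynomial-time computable, running in $O(dq)$ time per node and hence $O(Ndq)$ time in total.

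For the inductive step, assume $\Pr(\hat{X}_v \neq c \mid X_v = c) \leq \eta$ for every child $v$ and every $c \in [q]$. Conditional on $X_u = c$, the subtrees rooted at $v_1,\ldots,v_d$ are mutually independent by the tree Markov property and the independence of the leaf noise, so $\hat{X}_{v_1},\ldots,\hat{X}_{v_d}$ are i.i.d.\ draws from the distribution $p_c$ on $[q]$ given by $p_c(c') := \sum_{k} M_{c,k}\Pr(\hat{X}_{v_i} = c' \mid X_{v_i} = k)$. The inductive hypothesis yields $\|p_c - M_c\|_\infty \leq \eta$. By Hoeffding's inequality applied to each coordinate and a union bound over $c' \in [q]$,
$$\Pr(\|\hat{p} - p_c\|_\infty > \gamma/4 \mid X_u = c) \leq 2q\,\exp(-d\gamma^2/8).$$
Choose $d_0$ large enough that the right side is at most $\eta$ whenever $d \geq d_0$. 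On the complementary event, $\|\hat{p} - M_c\|_\infty \leq \gamma/4 + \eta$, while for any $c' \neq c$ the triangle inequality gives $\|\hat{p} - M_{c'}\|_\infty \geq \|M_c - M_{c'}\|_\infty - \|\hat{p} - M_c\|_\infty \geq 2\gamma - (\gamma/4 + \eta) > \gamma/4 + \eta$, using $\eta \leq \gamma/8$. Hence the argmin returns $c$, giving $\Pr(\hat{X}_u \neq c \mid X_u = c) \leq \eta$ and closing the induction.

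The main obstacle is ensuring that the recursive error does not accumulate with depth. This is handled by the self-sustaining nature of the bound $\eta$: the perturbation of $p_c$ away from $M_c$ is controlled by the previous-level error, while the empirical concentration error is controlled by $d$ alone, so a single choice of $d_0$ produces a fixed point of the recursion that applies at every depth. A secondary subtlety is that the inductive hypothesis must hold uniformly in the state $c$ rather than on average, as otherwise some color could drift to larger error at deeper levels; this is ensured by the worst-case-over-$c$ Chernoff bound combined with the uniform perturbation inequality $\|p_c - M_c\|_\infty \leq \eta$. Applying the bound at the root yields $\max_c \Pr(f(X'_L) \neq c \mid X_\rho = c) \leq \eta \leq \delta$, as required, which in particular implies $\epsilon$-noisy reconstruction is possible.
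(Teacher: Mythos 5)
Your proof is correct and follows essentially the same route as the paper's (which is itself only a sketch deferring to the proof of Theorem 2.1 in the cited work): a bottom-up recursive estimator whose per-level error is a self-sustaining fixed point, with the previous level's mistakes absorbed as a small perturbation of the children's distribution and concentration of the empirical distribution around the row $M_c$ handled by large deviations. The only cosmetic difference is that you instantiate the concentration step with Hoeffding plus a union bound rather than invoking Sanov's theorem.
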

\begin{proof}[Proof sketch.]
As explained above, this result follows from examination of the proof of Theorem 2.1 in \cite{mossel2003information}. For the reader's convenience, we summarize the main idea of the proof. 

In the base case of a depth $1$ tree, reconstruction of the root with probability at least $1 - \delta$ is possible provided $d$ is a suitably large constant, because by basic large deviations theory (Sanov's Theorem \cite{large-deviations}) the empirical distribution of the children will concentrate around the row of $M$ corresponding to the root label (which by assumption is distinct from all of the other rows). This procedure is also robust to a small amount of noise, which handles the case where $\epsilon > 0$ and in fact even if the $\epsilon$ proportion of children assigned labels by the noise process choose their labels adversarially. When doing the induction, the result of the reconstruction process at lower levels of the tree can therefore (by conditional independence) be modeled as the true values with a small amount of adversarial noise and this allows the same argument to show that at each level each vertex is recovered correctly with probability at least $1 - \epsilon$ (where we take $\epsilon := \delta$).
\end{proof}
\begin{remark}[RecMaj in Figure~\ref{fig:krr}]
The RecMaj algorithm in Figure~\ref{fig:krr} corresponds to the algorithm described in the above proof sketch: i.e. a recursive algorithm which to reconstruct the coloration of a vertex, looks at the reconstructions of its children, takes the empirical distribution, and picks the corresponding row of $M$ which is closest in $\ell_2$ norm. 
\end{remark}
\subsection{Low-Degree Polynomials succeed above the KS threshold}
The Kesten-Stigum threshold is the sharp threshold for \emph{count reconstruction} defined earlier. The definition of count reconstruction informally says that there is a nontrivial amount of mutual information between count statistics at the leaves and the value of the Markov Random Field at the root.   
To relate count reconstruction to low-degree polynomials, we use the following more precise result: 
\begin{lemma}[Proof of Theorem 1.4 of \cite{mossel2003information}]
Suppose that $d |\lambda_2(M)|^2 > 1$. 
There exist coefficients $s_c \in \mathbb{C}$ for $c \in [q]$ such that the random variable
\[ S = \sum_{c \in [q]} s_c \#\{X_{\ell} = c : \ell \in L\} \]
satisfies
\[ \E[S \mid X_{\rho} = c] = v_c \]
where $v$ is an unit-norm eigenvector of $M$ in its second-largest eigenspace, i.e. achieving $\|M v\| = |\lambda_2(M)|$, and such that
\[ \E[|S|^2 \mid X_{\rho} = c] \in [A,B] \]
where $0 < A \le B$ are constants depending only on $d$ and $M$ (in particular, they are independent of the depth of the tree). 
\end{lemma}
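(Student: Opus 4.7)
The plan is to choose $v$ to be a unit right-eigenvector of $M$ with eigenvalue $\lambda$ satisfying $|\lambda| = |\lambda_2(M)|$, set $N = d^\ell$, and define $s_c := v_c/(N\lambda^\ell)$, so that $S = \frac{1}{N\lambda^\ell}\sum_{u \in L} v_{X_u}$. The first assertion $\E[S \mid X_\rho = c] = v_c$ then follows immediately from linearity together with the identity $\E[v_{X_u} \mid X_\rho = c] = (M^\ell v)_c = \lambda^\ell v_c$: the Markov property gives that the conditional law of any leaf $X_u$ given $X_\rho = c$ is the $\ell$-step law $M^\ell(c,\cdot)$, and iterated application of $Mv = \lambda v$ produces the eigenvalue factor. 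Summing over the $N$ leaves exactly cancels the normalization $1/(N\lambda^\ell)$.

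For the second moment (interpreted as $\E[|S|^2 \mid X_\rho = c_0]$, since $s_c$ and $S$ are complex in general), the plan is to expand as a double sum over ordered pairs of leaves $(u_1,u_2)$ and partition the sum according to the depth $k$ of the most recent common ancestor $a$ of the pair. By the Markov property, $X_{u_1}$ and $X_{u_2}$ are conditionally independent given $X_a$, and each marginal satisfies $\E[v_{X_{u_i}} \mid X_a = c_a] = \lambda^{\ell-k}v_{c_a}$, giving
\[ \E[v_{X_{u_1}}\,\overline{v_{X_{u_2}}} \mid X_\rho = c_0] \;=\; |\lambda|^{2(\ell-k)} \sum_c (M^k)_{c_0,c}\,|v_c|^2. \]
Counting the $N$ diagonal pairs and the $(d-1)d^{2\ell-k-1}$ ordered pairs with MRCA at depth $k<\ell$ and substituting yields the closed form
\[ \E[|S|^2 \mid X_\rho = c_0] \;=\; \frac{\sum_c (M^\ell)_{c_0,c}|v_c|^2}{(d|\lambda|^2)^\ell} \;+\; \frac{d-1}{d}\sum_{k=0}^{\ell-1} \frac{\sum_c (M^k)_{c_0,c}|v_c|^2}{(d|\lambda|^2)^k}. \]
The upper bound $B$ is then immediate from the hypothesis $d|\lambda|^2 > 1$: each inner sum is a convex combination of the values $|v_c|^2$ and hence at most $\|v\|_2^2 = 1$, so the series is dominated by a convergent geometric series in $(d|\lambda|^2)^{-1}$ whose total depends only on $d$ and $M$.

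For the lower bound $A > 0$, the plan is to exploit ergodicity of $M$. Since $\sum_c |v_c|^2 = 1$, we have $\sum_c \pi(c)|v_c|^2 \ge \min_c \pi(c) > 0$; and by choosing $k_0$ larger than, say, the $\tfrac{1}{2}$-total-variation mixing time of $M$, the bound $(M^{k_0})_{c_0,c} \ge \tfrac{1}{2}\pi(c)$ holds for all $c_0, c$, so the $k=k_0$ term of the displayed sum is uniformly at least $\tfrac{d-1}{2d(d|\lambda|^2)^{k_0}}\sum_c \pi(c)|v_c|^2 > 0$, giving the desired lower bound whenever $\ell > k_0$. The main obstacle is the small-depth regime $\ell \le k_0$, where too few terms have accumulated for ergodic averaging to kick in and where $v_{c_0} = 0$ could in principle make the expression vanish at a particular $c_0$; this is handled either by restricting the statement to $\ell \ge k_0$ (which is all that is needed in the downstream count-reconstruction application), or by verifying the finitely many small-$\ell$ cases individually and absorbing them into a smaller but still positive constant $A$ that depends only on $d$ and $M$.
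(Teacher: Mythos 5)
Your proposal is correct and follows essentially the same route as the argument the paper cites (the classical Kesten--Stigum second-moment computation from the proof of Theorem 1.4 of Mossel--Peres): project the leaf counts onto a second eigenvector, normalize by $(d\lambda)^{\ell}$ via $N\lambda^{\ell}$, and control $\E[|S|^2]$ by grouping leaf pairs according to the depth of their most recent common ancestor, with the hypothesis $d|\lambda_2|^2>1$ making the resulting geometric series converge. Your caveat about the lower bound at very small depths $\ell$ is a genuine (and correctly diagnosed) subtlety of the statement as written, and your resolution --- ergodic mixing for $\ell$ beyond a fixed $k_0(M)$, which is all the asymptotic count-reconstruction application requires --- is the right one.
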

As a consequence of this, we immediately obtain that low-degree polynomials (in fact, degree 1 polynomials) have nontrivial correlation with the root above the KS threshold, in the same sense as Definition~\ref{def:deg-d-correlation}.
\subsubsection{A Question: Bayes-Optimal Reconstruction}
We saw above that degree-1 polynomials of the leaves are sufficient to achieve nontrivial correlation with the root, provided that the model we consider is above the KS threshold. A natural question is whether higher degree polynomials have a significant advantage over degree-1 polynomials for estimating the value of the root. Relevant to this question, we recall the following result and conjecture from \cite{mossel2014belief} which concerns noise-robust recovery with the Binary Symmetric Channel (equivalently, the Ising model on trees without external field):
\begin{theorem}[Theorem 3.2 of \cite{mossel2014belief}]\label{thm:mns}
There exists an absolute constant $C \ge 1$ such that the following result is true. 
For $\theta \ge 0$ let
\[ M = \begin{bmatrix}
(1 + \theta)/2 & (1 - \theta)/2 \\
(1 - \theta)/2 & (1 + \theta)/2
\end{bmatrix} \]
and observe that $\lambda_2(M) = \theta$.
If $d \theta^2 > C$, then for all $\epsilon < 1$ and $X'_L$ defined by the $\epsilon$-noisy broadcast model,
\begin{align*} 
&\lim_{\ell \to \infty} d_{TV}(\mathcal{L}_{\mu_{\ell}}(X'_L = \cdot \mid X_{\rho} = 1), \mathcal{L}_{\mu_{\ell}}(X'_{L} = \cdot \mid X_{\rho} = 0)) \\
&= \lim_{\ell \to \infty} d_{TV}(\mathcal{L}_{\mu_{\ell}}(X_L = \cdot \mid X_{\rho} = 1), \mathcal{L}_{\mu_{\ell}}(X_{L} = \cdot \mid X_{\rho} = 0))
\end{align*}
in other words, if $\epsilon < 1$ is fixed then in the limit of infinite depth the probability of reconstructing the root correctly is the same as in the noiseless case $\epsilon = 0$.
\end{theorem}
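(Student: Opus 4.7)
The plan is to establish the equality by proving matching inequalities. For the ``$\le$'' direction, observe that $X'_L$ is obtained from $X_L$ by applying the per-coordinate noise channel $T_\epsilon$, which acts independently of $X_\rho$. The data processing inequality then gives
\[ d_{TV}(\mathcal{L}_{\mu_\ell}(X'_L \mid X_\rho = 1), \mathcal{L}_{\mu_\ell}(X'_L \mid X_\rho = 0)) \le d_{TV}(\mathcal{L}_{\mu_\ell}(X_L \mid X_\rho = 1), \mathcal{L}_{\mu_\ell}(X_L \mid X_\rho = 0)) \]
for every $\ell$, so this inequality passes to the $\ell \to \infty$ limit.

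For the harder ``$\ge$'' direction I would analyze the Belief Propagation recursion on posterior magnetizations. Encoding $X_\rho \in \{\pm 1\}$, write $Z_\ell^\epsilon := 2 \Pr(X_\rho = +1 \mid X'_L) - 1$ for the posterior magnetization with $\epsilon$-noisy leaves on the depth-$\ell$ tree (under the uniform root prior). A standard identity gives $d_{TV}(\mathcal{L}_{\mu_\ell}(X'_L \mid X_\rho = 1), \mathcal{L}_{\mu_\ell}(X'_L \mid X_\rho = 0)) = \E[|Z_\ell^\epsilon|]$, where the expectation is under the mixture of the two conditional laws. By the tree Markov property and the symmetry of the BSC, $Z_\ell^\epsilon$ conditional on $X_\rho = +1$ satisfies the distributional recursion
\[ Z_\ell^\epsilon \stackrel{d}{=} \tanh\Bigl(\sum_{i=1}^d \mathrm{atanh}\bigl(\theta\, Z_{\ell-1}^{\epsilon,(i)}\bigr)\Bigr), \]
where the $Z_{\ell-1}^{\epsilon,(i)}$ are independent subtree magnetizations whose root labels are drawn from the first row of $M$, with base case $|Z_0^\epsilon| = 1 - \epsilon$. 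The goal becomes showing that the limiting law of $Z_\ell^\epsilon$ coincides with that of $Z_\ell^0$ for every $\epsilon \in [0, 1)$.

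I would attack this in two stages. First, prove a uniform-in-$\epsilon$ lower bound: if $d\theta^2 > C$ then $\liminf_\ell m_\ell^\epsilon \ge m_* > 0$, where $m_\ell^\epsilon := \E[Z_\ell^\epsilon \mid X_\rho = +1]$ and $m_*$ depends only on $d$ and $\theta$. This uses that the linearization of the BP recursion at $Z = 0$ has multiplier $d\theta^2$: for $d\theta^2 > 1$ the trivial fixed point is repelling, and the stronger hypothesis $d\theta^2 > C$ ensures the second-order Taylor corrections to $\tanh$ and $\mathrm{atanh}$ are controlled well enough to produce a stable nontrivial fixed point with a uniform basin of attraction (any strictly positive initial $m_0^\epsilon > 0$ converges to it). Second, prove that the BP operator, restricted to symmetric distributions on $[-1,1]$ with magnetization $\ge m_*$, is a strict contraction in an appropriate Wasserstein-type metric. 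Its unique fixed point is then independent of the initialization, so $\lim_\ell \E[|Z_\ell^\epsilon|] = \lim_\ell \E[|Z_\ell^0|]$, and combined with the easy direction this gives the claimed equality.

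The main obstacle is the contraction step, where the precise value of $C$ gets determined. The Kesten--Stigum condition $d\theta^2 > 1$ only gives instability of the trivial fixed point; proving that the nontrivial fixed point is globally attracting on the positive-magnetization set is qualitatively stronger. A natural approach is to use concavity of $\tanh$ together with a Fisher-information or log-Sobolev-type inequality to bound the Wasserstein distance between successive BP iterates by a factor strictly less than $1$; $C$ is chosen large enough that this factor is bounded away from $1$ uniformly in $\epsilon$. This delicate estimate is the technical heart of the argument in \cite{mossel2014belief}, and is where most of the work goes.
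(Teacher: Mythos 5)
This theorem is not proved in the paper at all: it is imported verbatim as Theorem 3.2 of \cite{mossel2014belief}, so there is no internal proof to compare against. Judged on its own terms, your sketch does correctly identify the two halves of the argument. The ``$\le$'' direction via the data processing inequality is complete and correct (conditionally on $X_\rho$, $X'_L$ is obtained from $X_L$ by an independent channel $T_\epsilon$). The reduction of the ``$\ge$'' direction to the distributional BP recursion on magnetizations, the identity $d_{TV} = \E[|Z_\ell|]$, and the base case $|Z_0^\epsilon| = 1-\epsilon$ are all right, and this is indeed the framework of the original proof.

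The genuine gap is in the second stage. The claim that the BP operator is a strict contraction, in some Wasserstein-type metric, on the set of distributions with magnetization at least $m_*$ \emph{is} the theorem; everything else in your outline is routine. The mechanism you propose for it --- ``concavity of $\tanh$ together with a Fisher-information or log-Sobolev-type inequality'' --- is not an argument, and it is also not how \cite{mossel2014belief} proceeds: their proof exploits exact symmetry identities for the magnetization (e.g. $\E[Z_\ell^2] = \E[Z_\ell \mid X_\rho = +1]$) to show that for $d\theta^2 > C$ the magnetization is pushed close to $1$ uniformly in the initialization, and then couples the noisy and noiseless recursions directly; no log-Sobolev inequality appears. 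Two further points your outline elides: (i) the existence of the limits themselves must be established (the noiseless sequence is monotone by data processing along $X_\rho \to X_{L_\ell} \to X_{L_{\ell+1}}$, but the noisy sequence is not obviously monotone); and (ii) your stage-one claim that ``any strictly positive $m_0^\epsilon$ converges'' to the nontrivial fixed point uniformly in $\epsilon$ needs a quantitative argument, since $m_0^\epsilon = 1-\epsilon$ can be arbitrarily small and the number of iterations to escape a neighborhood of $0$ must not blow up in a way that interferes with the contraction estimate. As written, the proposal is a correct road map with the hard step deferred to the very reference being reproved.
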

(Recall that the equivalence of the statement in terms of TV and in terms of maximum probability of reconstructing the root follows from the Neyman-Pearson Lemma \cite{neyman1933ix}.)
This statement is conjectured to hold with $C = 1$ \cite{mossel2014belief} and as explained there, is closely related to Bayes-optimal recovery in the stochastic block model. Based on this, we ask the following question:
\begin{question}
Do polynomials of degree $O(\log N)$ achieve asymptotically Bayes-optimal recovery with the above channel when $d \theta^2 > 1$? More precisely, does there exist a polynomial threshold function $f$ of degree $O(\log N)$ which asymptotically achieves
\[ \Pr(f(X_L) = X_{\rho}) = (1 + o(1)) \Pr(\text{sgn}(\E[X_{\rho} \mid X_L] - 1/2) = X_{\rho}) \]
where the rhs is the error of the Bayes-optimal estimator. 
\end{question}
It seems likely the answer to this question is positive. The reason for this is the following: (1) if the conjectured strengthening of Theorem~\ref{thm:mns} is true, then it implies that the combination of a majority vote up to some depth and $\omega(1)$ number of rounds of belief propagation achieves Bayes-optimal recovery, and (2) a constant or very slowly growing number of rounds of belief propagation can be simulated with low-degree polynomials (see Appendix of \cite{gamarnik2020low}), and the threshold used in the majority vote should also be approximable by polynomials. We state the conjecture with $O(\log N)$ degree polynomials since this is informally considered to correspond to ``polynomial time algorithms'' in the low-degree framework \cite{hopkins2018statistical,kunisky2019notes}, but based on the above discussion it seems likely that a smaller degree than $O(\log N)$ is sufficient, e.g. any degree going to infinity with $N$ may be sufficient.


\section{Unknown Tree Setting}\label{sec:unknown-tree}
In this section, we show that for any channel $M$ satisfying the conditions of Theorem~\ref{thm:noise-robust-reconstruction}, i.e. such that for sufficiently large $d$ reconstructing the root is possible (in the known tree setting/in the usual sense), then in the unknown tree setting that a relatively simple algorithm succeeds at reconstructing the root with a polynomial number of samples, and this algorithm can be straightforwardly implemented in the SQ (Statistical Query) model with polynomial number of queries and error tolerance.

The key step in the algorithm for reconstructing the root is a method of reconstructing the tree, which lets us reduce to the known tree setting. This kind of problem has previously been extensively studied in the context of phylogenetic reconstruction with particular channels $M$ coming from biology, and for example algorithms with polynomial runtime and sample complexity are known in the case that $M$ is a nonsingular matrix \cite{mossel2005learning}. In the present context, we are very interested in the case of singular matrices (e.g. those with $\lambda_2(M) = 0$) so we cannot rely on existing results. 

\paragraph{Model.} We remind the reader that in the unknown tree setting, we are in the model of Definition~\ref{defn:repeated-broadcasting}. This means that an unknown $Y^*$ is sampled from $Uni([q])$, and the algorithm seeks to reconstruct $Y^*$ given access to $m$ i.i.d. samples $X^{(1)}_L,\ldots, X^{(m)}_L$ of the leaves generated by the broadcasting process with root prior $(2/3) \delta_{Y^*} + (1/3) Uni([q])$, i.e. the root is biased/tilted towards the unknown $Y^*$. When we say the tree is ``unknown'' in this model, it means that the algorithm is not given a priori knowledge of the true order of the leaves, e.g. the algorithm does not know at the beginning whether coordinates $1$ and $2$ of $X^{(1)}_L$ correspond to siblings or to leaves far apart in the tree (this is completely analogous to the situation in phylogenetic reconstruction \cite{steel2016phylogeny}). In the definition of this model, this is modeled by shuffling the order of the leaves by an unknown permutation $\tau$; note that this order is kept consistent between each sample.
\subsection{Failure of low-degree polynomials}
\begin{theorem}\label{thm:low-degree-failure2}
Let $M$ be the transition matrix of a Markov chain on $[q]$ and suppose that $1 \le k \le q$ is such that $M^k$ is a rank-one matrix.
If $c \in [q]$ is arbitrary and $f$ is a polynomial with Efron-Stein degree strictly less than $2^{\lfloor \ell/(k - 1) \rfloor}$, then 
\[ \E_R[f(\mathbb{X})(\bone(Y^* = c) - 1/q)]  = 0 \]
where $R$ is as defined in Definition~\ref{defn:repeated-broadcasting}.
\end{theorem}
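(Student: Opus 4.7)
The plan is to reduce this multi-sample, unknown-tree statement to the single-sample known-tree statement (Theorem~\ref{thm:no-MI}) by conditioning on the pair $(Y^*, T)$. Once conditioned on the tree $T$, the $m$ samples $X^{(1)}, \ldots, X^{(m)}$ are i.i.d.\ copies of the broadcast process, so the problem essentially decouples across samples, and the key structural fact from the proof of Theorem~\ref{thm:no-MI} — that any small enough subset of leaves is independent of the root regardless of the root prior — transfers directly.

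First I would write $f$ in its Efron-Stein decomposition over the coordinates of $\mathbb{X}$, which are indexed by pairs $(i,v)$ with $i \in [m]$ and $v \in L$. Setting $D := 2^{\lfloor \ell/(k-1)\rfloor}$, we have
\[
f(\mathbb{X}) \;=\; \sum_{\substack{S \subseteq [m]\times L \\ |S| < D}} f_S(\mathbb{X}_S),
\]
so by linearity of expectation it suffices to prove $\E_R[f_S(\mathbb{X}_S)(\bone(Y^* = c) - 1/q)] = 0$ for each fixed $S$ with $|S| < D$. For such an $S$, define the per-sample slices $S_i := \{v \in L : (i,v) \in S\}$; then $\sum_i |S_i| = |S| < D$ and in particular $|S_i| < D$ for every $i$.

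Next I would condition on $(Y^*, T)$. Conditional on this pair, each $X^{(i)}$ is an independent draw of the $\epsilon$-noisy broadcast process on the known tree $T$ with root prior $(2/3)\delta_{Y^*} + (1/3)\,\mathrm{Uni}([q])$. The content of Theorem~\ref{thm:no-MI}, as inspection of its proof reveals, is not merely the mutual-information statement under a uniform prior but the stronger structural fact that the conditional law of $X_{S_i}$ given $X^{(i)}_\rho = c$ does not depend on $c$ — the reductions in that proof use only the tree Markov property and the rank-one structure of $M^k$, which are prior-free. Consequently the marginal law of $X^{(i)}_{S_i}$ depends only on $T$, not on $Y^*$ nor on the choice of root prior. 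Adding independent coordinate noise preserves this, and by conditional independence across samples the joint distribution of $\mathbb{X}_S$ given $(Y^*, T)$ is a function of $T$ alone. Writing $g(T) := \E[f_S(\mathbb{X}_S) \mid T]$, we therefore have $\E[f_S(\mathbb{X}_S) \mid Y^*, T] = g(T)$, and
\[
\E\bigl[f_S(\mathbb{X}_S)(\bone(Y^* = c) - 1/q) \,\big|\, T\bigr] \;=\; g(T)\,\E[\bone(Y^* = c) - 1/q \mid T] \;=\; 0,
\]
since $Y^*$ is uniform on $[q]$ and, being drawn independently of $\tau$, remains uniform conditional on $T$. Taking expectation over $T$ finishes the proof.

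The one step worth scrutinizing is the invocation of Theorem~\ref{thm:no-MI} with a non-uniform root prior: the theorem as stated produces $I(X_\rho; X_S) = 0$ under the uniform prior, but the argument actually builds the equality of conditional laws $\mathcal{L}(X_S \mid X_\rho = c)$ across $c$, which is what we need to decouple $\mathbb{X}_S$ from $Y^*$. Apart from this small clarification, the proof is a clean conditioning argument that piggybacks on the known-tree lower bound; the multi-sample structure creates no new difficulty because samples are conditionally independent given $(Y^*, T)$ and the bound $|S| < D$ applies uniformly to each per-sample slice $S_i$.
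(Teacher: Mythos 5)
Your proof is correct and follows essentially the same route as the paper's: an Efron--Stein reduction to low-degree per-sample pieces, conditional independence of the samples given $(Y^*,T)$, and an appeal to Theorem~\ref{thm:no-MI} to decouple each piece from $Y^*$ (the paper reduces further to product-form terms $\prod_i f_{S_i}(X^{(i)}_L)$, whereas you argue directly that the conditional law of $\mathbb{X}_S$ given $(Y^*,T)$ is a function of $T$ alone, a cosmetic difference). Your closing observation --- that one must use the prior-free equality of conditional laws $\mathcal{L}(X_{S_i}\mid X_\rho=c)$ established inside the proof of Theorem~\ref{thm:no-MI}, rather than the mutual-information statement under the uniform prior --- is a valid and worthwhile clarification of a point the paper's proof glosses over.
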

\begin{proof}
Let $\nu(c) = 1/q$ for $c \in [q]$ denote the prior on $Y^*$.

By linearity of expectation and the definition of Efron-Stein degree, it suffices to show the result for functions $f$ of the form $f_{S_1}(X^{(1)}_L) \cdots f_{S_m}(X^{(m)}_L)$ where $\sum_i |S_i| < 2^{\lfloor \ell/(k - 1) \rfloor}$, where each $f_{S_i}(X^{(i)}_L)$ is a function only of the coordinates of its input in $S_i$. Since the samples $X^{(1)},\ldots,X^{(m)}$ are conditionally independent given the value of $Y^*$, we have 
\begin{align*}
    &\hspace{-1cm}\E_R\left[\left(\prod_{i = 1}^m f_{S_i}(X^{(i)}_L)\right) (\bone(Y^* = c) - \nu(c))\right] \\
    &= \E_R\left[\E\left[\left(\prod_{i = 1}^m f_{S_i}(X^{(i)}_L)\right) (\bone(Y^* = c) - \nu(c)) \mid Y^* \right]\right] \\
    &= \E_R\left[\left(\prod_{i = 1}^m \E[f_{S_i}(X^{(i)}_L) \mid Y^*]\right) (\bone(Y^* = c) - \nu(c))\right] \\
    &= \E_R\left[\left(\prod_{i = 1}^m \E[f_{S_i}(X^{(i)}_L)]\right) (\bone(Y^* = c) - \nu(c))\right]
    = 0
\end{align*}
where in the first equality we used the law of total expectation, in the second equality we used the aforementioned conditional independence, in the third equality we crucially used that by Theorem~\ref{thm:no-MI} the low-degree polynomial $f_{S_i}(X^{(i)}_L)$ is independent of the root value and thus $Y^*$, and in the last step we used that $Y^* \sim \nu$ by definition. 
\end{proof}
\subsection{Reconstruction Algorithm}
For $c \in [q]$, let $e(c)$ or $e_c$ denote the $q$th standard basis vector in $\mathbb{R}^q$. In both cases, the vector is a column vector.
\begin{lemma}\label{lem:marginal-lb}
Suppose that $\nu$ is a probability measure on $[q]$ and $\nu(c) > 0$ for all $c \in [q]$, then there exists a constant $\alpha = \alpha(M,\nu) > 0$ such that the following is true. Let $(X_u)_u \sim \mu$ for $u \in V$ be defined by the broadcasting process on $T = (V,E,\rho)$ with prior $\nu$ at the root and channels corresponding to $M : q \times q$ the transition matrix of an ergodic Markov chain. Then $\mu(X_u = c) > \alpha$ for all $u \in V$.
\end{lemma}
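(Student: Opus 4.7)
The goal is a uniform lower bound on $\mu(X_u = c)$ over all vertices $u \in V$ and all states $c \in [q]$, with the constant depending only on $M$ and $\nu$. I would avoid any quantitative mixing estimate and instead exploit the stationarity identity $\pi_M M^k = \pi_M$ directly; this has the advantage that no dependence on the tree, the depth, or on a mixing-time parameter is introduced.

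First I would identify the marginal explicitly. If $u$ sits at graph distance $k = k(u) \ge 0$ from the root $\rho$, then by the tower property of the broadcast process the law of $X_u$ is precisely $\nu M^k$ (viewed as a row vector), so
\[ \mu(X_u = c) = \sum_{c'} \nu(c') (M^k)_{c', c} \;\ge\; \nu_{\min} \cdot \max_{c'} (M^k)_{c', c}, \]
where $\nu_{\min} := \min_{c'} \nu(c') > 0$ by hypothesis (the inequality just retains the single summand attaining the maximum). The task is thereby reduced to finding a lower bound on $\max_{c'} (M^k)_{c', c}$ that is uniform in $k$.

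The key step is the observation that this maximum is at least $\pi_M(c)$ for every $k$. This follows by writing $\pi_M(c) = \sum_{c'} \pi_M(c') (M^k)_{c', c}$, which holds by stationarity for every $k \ge 0$, and then bounding the right-hand side from above by $\max_{c'} (M^k)_{c', c} \cdot \sum_{c'} \pi_M(c') = \max_{c'} (M^k)_{c', c}$ since $\pi_M$ is a probability vector. Because $M$ is ergodic (irreducible), $\pi_M$ has full support, so $\pi_{\min} := \min_c \pi_M(c) > 0$ depends only on $M$. Combining the two displays yields $\mu(X_u = c) \ge \nu_{\min} \pi_{\min}$, and setting $\alpha := \tfrac{1}{2} \nu_{\min} \pi_{\min}$ recovers the strict inequality.

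I do not anticipate a serious obstacle. The one point worth flagging is that the uniformity in $k$ should not be obtained by a mixing-time argument -- vertices at small depth could in principle sit outside any mixing window and some entries of $M^k$ for small $k$ might even vanish -- so it is important that the bound on $\max_{c'} (M^k)_{c', c}$ comes from the exact self-similar identity $\pi_M M^k = \pi_M$ rather than from an asymptotic convergence statement for $M^k$.
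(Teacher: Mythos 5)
Your proof is correct and rests on the same mechanism as the paper's: the exact stationarity identity $\pi_M M^k = \pi_M$ together with full support of $\nu$ and of $\pi_M$ (the paper writes $\nu = \beta\pi_M + (1-\beta)\nu'$ and concludes $\nu M^k \ge \beta\pi_M$ entrywise, while you bound the column maximum of $M^k$ below by $\pi_M(c)$; these are the same idea with slightly different bookkeeping). Your halving of the constant to make the inequality strict is, if anything, a bit more careful than the paper's own write-up.
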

\begin{proof}
Under the assumptions, there exists some $\beta > 0$ such that $\nu = \beta \pi_M + (1 - \beta) \nu'$ for $\nu'$ a probability measure. Because $\pi_M$ is the stationary distribution and the marginal law at any vertex $u$ is $\nu M^k$ for some $k \ge 0$, it follows that $\mu(X_u = c) > \beta \pi_M(c) \ge \min_c \beta \pi_M(c) =: \alpha > 0$.
\end{proof}
\begin{lemma}\label{lem:moment-relatives}
Suppose that $u,v$ are two descendants of node $w$ at graph distance $k$ from $w$
and random variables $X_u,X_v,X_w$ follow
 the Markov process on trees $\mu$ with transition matrix $M : q \times q$. Then
\[ \E[e(X_u) e(X_v)^T] = (M^k)^T \Pi_w M^k \]
where $\Pi_w : q \times q$ is a diagonal matrix with entries the marginal law of $X_w$, i.e. $(\Pi_w)_{cc} = \mu(X_w = c)$ for $c \in [q]$. 
\end{lemma}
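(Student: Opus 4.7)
The plan is to condition on $X_w$ and exploit the Markov property on the tree, which makes $X_u$ and $X_v$ conditionally independent given $X_w$ because $w$ separates $u$ from $v$ in $T$ (every path from $u$ to $v$ passes through $w$, since both are descendants of $w$). This reduces the computation of the $q \times q$ matrix $\E[e(X_u) e(X_v)^T]$ to a tensor product of two single-variable conditional expectations, averaged over $X_w$.

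First I would observe that for any $c \in [q]$, the conditional expectation $\E[e(X_u) \mid X_w = c]$ is the column vector whose $c'$-th entry is $\Pr(X_u = c' \mid X_w = c) = (M^k)_{c,c'}$, because by the Markov property the channel from $X_w$ to its depth-$k$ descendant $X_u$ has transition matrix $M^k$. In matrix form this reads $\E[e(X_u) \mid X_w = c] = (M^k)^T e(c)$. Similarly $\E[e(X_v)^T \mid X_w = c] = e(c)^T M^k$. By conditional independence,
\[ \E[e(X_u) e(X_v)^T \mid X_w = c] = (M^k)^T e(c) e(c)^T M^k. \]

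Next I would take the outer expectation over $X_w$ using the tower property:
\[ \E[e(X_u) e(X_v)^T] = (M^k)^T \, \E\bigl[e(X_w) e(X_w)^T\bigr] \, M^k. \]
The key remaining observation is that $e(X_w) e(X_w)^T$ is the diagonal matrix with a single $1$ in position $(X_w,X_w)$, so its expectation is exactly the diagonal matrix $\Pi_w$ whose $(c,c)$ entry is $\mu(X_w = c)$. Substituting yields $(M^k)^T \Pi_w M^k$, as required.

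No step here looks delicate: the only mild subtlety is bookkeeping around transposes and the fact that rows of $M^k$ (not columns) give the conditional law, so the matrix in the middle sandwiched between $(M^k)^T$ and $M^k$ must indeed be $\Pi_w$ rather than some rescaled variant. The identity $\E[e(X_w) e(X_w)^T] = \Pi_w$ makes this automatic. The conditional independence invocation is the conceptual heart of the argument, and it is immediate from the tree Markov property stated in the preliminaries.
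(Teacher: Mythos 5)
Your proof is correct and follows the same route as the paper's: condition on $X_w$, use the tree Markov property to factor $\E[e(X_u)e(X_v)^T \mid X_w]$ into the product of the two conditional expectations $(M^k)^T e(X_w)\cdot e(X_w)^T M^k$, and then observe that $\E[e(X_w)e(X_w)^T] = \Pi_w$. Nothing to add.
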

\begin{proof}
Using the law of total expectation and using by the Markov property that $X_u$ and $X_v$ are conditionally independent given $X_w$, we have
\[ \E[e(X_u) e(X_v)^T] = \E[ \E[e(X_u) \mid X_w] \E[e(X_v)^T \mid X_w]] = \E[ (e_{X_w}^T M^k)^T (e_{X_w}^T M^k)] = (M^k)^T \Pi_w M^k   \]
where in the last equality we used the definition of $\Pi_w$ and the definition of the broadcast process in terms of the transition matrix $M$.
\end{proof}
Based on this, we can recursively reconstruct the tree when the degree is sufficiently large. We note that for other channels like the BSC channel, tree reconstruction methods often handle internal nodes $u$ by computing majorities of the nodes under them, which gives an unbiased estimate of the spin $X_u$, but this technique is not applicable in our setting (it's unclear that unbiased estimators exist). Nevertheless, we show that applying the estimator from Theorem~\ref{thm:noise-robust-reconstruction} can be used in a similar way, provided the degree $d$ is sufficiently large.
\begin{theorem}\label{thm:sq-tree}
Suppose $M$ is a the transition matrix of a Markov chain with pairwise distinct rows, i.e. for all $i,j \in [q]$ the rows $M_i$ and $M_j$ are distinct vectors. If $|\lambda_2(M)| > 0$, additionally suppose that the prior on the root of the tree is the stationary distribution of $M$.
There exists $d \ge 1$ and $\epsilon > 0$ so that the following result holds true for the complete $d$-ary tree with any depth $\ell \ge 1$. 
For any $\delta > 0$, there exist a polynomial time algorithm with sample complexity $m = poly_M(\log N,\log(1/\delta))$ from the $\epsilon$-noisy repeated broadcast model (Definition~\ref{defn:repeated-broadcasting})
which with probability at least $1 - \delta$:
\begin{enumerate}
    \item outputs the true tree $T$ (equivalently, the true permutation $\tau$)
    \item outputs $\hat{Y}$ such that $\hat{Y} = Y^*$.
\end{enumerate}
Also, this algorithm can be implemented in the Statistical Query (SQ) model using a $VSTAT(m)$ oracle with $m = poly_M(\log(N/\delta))$ and polynomial number of queries. 
\end{theorem}
\begin{proof}
Given that the algorithm can correctly output the true tree $T$, the fact that it outputs the correct root label follows straightforwardly from Theorem~\ref{thm:noise-robust-reconstruction} by using the algorithm specified in that result to estimate the root in each sample, and then taking the majority vote over those samples (which will succeed with high probability provided we take $\Omega(\log(2/\delta))$ samples due to Hoeffding's inequality), and this can approach can also clearly be implemented in the SQ model (the SQ query is the robust reconstruction function of the leaves which outputs a vector, so we take the expectation of this and look at the largest entry of this vector). In the remainder of the proof, we show how to correctly output the true tree $T$ with high probability.

We first prove the result in the case that $\lambda_2(M) \ne 0$ and afterwards describe how to modify the argument straightforwardly when $\lambda_2(M) = 0$. Let $\varphi$ be a right eigenvector such that $M \varphi = \lambda_2 \varphi$. 
We start by describing the algorithm which computes the estimated tree $\hat{T}$ from the bottom up: let $\alpha = \alpha(M,\delta) > 0$ be a parameter to be set later. Let $\hat \E[\cdot]$ denote the expectation over the empirical distribution of $m$ samples, so for any function $f$ we have $\hat E[f(X)] = \frac{1}{m} \sum_i f(X^{(i)})$. 
\begin{enumerate}
    \item Base case: for all leaves $u \ne v$ define $g(u,v) := |\langle \varphi, \hat E[e(X_u) e(X_v)^T] \varphi \rangle|$. Let $g_{max} = \max_{u \ne v} g(u,v)$ and set $u,v$ to be neighbors in $\hat{T}$ iff $g(u,v) \ge g_{max} - \alpha$. This constructs the first layer of the tree $\hat{T}$.
    \item Recursive case: suppose that we have reconstructed the first $s \ge 1$ layers of the tree (from the bottom), and the current layer of the tree has more than one element. For each pair of internal nodes $u,v$ at the current level of the tree, let $S_u,S_v$ be the set of leaves under these nodes and let $g_s(u,v) := |\langle \varphi, \hat E[e(f_{M, \ell - s}(X_{S_u})) e(f_{M, \ell -s}(X_{S_v}))^T] \varphi \rangle|$ where $f_{M,\ell - s}$ is as defined in Theorem~\ref{thm:noise-robust-reconstruction}.
    Let $g_{max} = \max_{u \ne v} g(u,v)$ and set $u,v$ to be neighbors in $\hat{T}$ iff $g(u,v) \ge g_{max} - \alpha$. This constructs the next layer of the tree $\hat{T}$.
\end{enumerate}
We now need to show that with total probability at least $1 - \delta$, $\hat{T} = T$.
First we consider the behavior of the base case; for simplicity, we first describe the argument when $\epsilon = 0$. Observe that if $u$ and $v$ are siblings in $T$ at depth $\ell$ then by Lemma~\ref{lem:moment-relatives}
\[ \langle \varphi, \E[e(X_u) e(X_v)^T] \varphi \rangle = |\lambda_2|^2 \langle \varphi, \Pi_{\ell -1} \varphi \rangle \]
where $\Pi_{\ell - 1}$ is a diagonal matrix encoding the marginal law of $X_w$ for any $w$ at depth $\ell - 1$,
and similarly, if $u$ and $v$ are not siblings then they are at graph distance at least $4$ in $T$ so
\[ \langle \varphi, \E[e(X_u) e(X_v)^T] \varphi \rangle \le |\lambda_2|^4 \langle \varphi, \Pi_{\ell - 1} \varphi \rangle \]
which is smaller by a factor of $|\lambda_2|^2$. (Note, here we are using the fact that in the case $|\lambda_2| > 0$, we additionally assumed the prior at the root is stationary and so the marginal law  at every depth in the tree is the stationary distribution.)
Observe that by Hoeffding's inequality and the union bound we have that with probability at least $1 - \delta/n$ that in the base case step, every entry of the matrix $\hat{\E}[e(X_u) e(X_v)^T]$ for every pair of leaves $u \ne v$ is within additive error $O(\sqrt{\log(n/\delta)/m})$ of its expectation. It follows from this and Lemma~\ref{lem:marginal-lb} that if $\alpha = (1/C_M) (|\lambda_2|^2 - |\lambda_2|^4)$
for $C_M$ a sufficiently large constant depending only on $M$, 
 $\epsilon$ is sufficiently small with respect to $\alpha$, 
and $m = \Omega_M(\log(n/\delta))$ then in the base case the algorithm computes neighbors correctly. Observe that at each layer, if the algorithm has correctly reconstructed $T$ in all previous layers then the sets $S_u$ for all nodes $u$ in this layer are deterministic functions of $T$, and hence so are the queries the algorithm makes to $\hat E$. By a similar application of the union bound and Hoeffding's inequality as well as Theorem~\ref{thm:noise-robust-reconstruction} and the assumption that $d$ is sufficiently large with respect to $M$ it follows that the algorithm succeeds at all subsequent layers as well. 

Note that provided we take $\epsilon > 0$ is sufficiently small, we can show the base case of the argument will still succeed by using the triangle inequality, and the inductive step in the argument will succeed because of Theorem~\ref{thm:noise-robust-reconstruction}. 

Finally, in the case that $\lambda_2(M) = 0$, we let $\varphi$ be a generalized eigenvector such that $M \varphi \ne 0$ but $M^2 \varphi = 0$. Note that such a vector must exist because, $0$ is an eigenvalue of algebraic multiplicity $q - 1$ as $M$ is ergodic and $\lambda_2 = 0$, and because our assumption on $M$ rules out the case that $M$ is rank one, so it's Jordan normal form must have at least one Jordan block with size at least $2$ and this corresponds to the existence of such a generalized eigenvector $\varphi$. Now observe for such a $\varphi$ that if $u,v$ are siblings in $T$ at depth $\ell$ then
\[ \langle \varphi, \E[e(X_u) e(X_v)^T] \varphi \rangle = \langle M \varphi, \Pi_{\ell -1} M \varphi \rangle \]
which by Lemma~\ref{lem:marginal-lb} is lower bounded by a constant $C'_M > 0$,
while if $u,v$ are not siblings,
\[ \langle \varphi, \E[e(X_u) e(X_v)^T] \varphi \rangle = 0. \]
Setting $\alpha = C'_M/2$ and defining the remaining constants similarly to above ensures the algorithm succeeds, by the same argument. 

Note that in both the case $\lambda_2(M) \ne 0$ and $\lambda_2(M) = 0$, the algorithm is implemented by taking the expectation of certain functions over the samples, so it is straightforwardly implementable with SQ queries by replacing the empirical expectation with the VSTAT oracle. 
\end{proof}

\appendix

\bibliographystyle{plain}
\bibliography{bib}

\begin{thebibliography}{10}

\bibitem{abbe2017community}
Emmanuel Abbe.
\newblock Community detection and stochastic block models: recent developments.
\newblock {\em The Journal of Machine Learning Research}, 18(1):6446--6531,
  2017.

\bibitem{akavia2006basing}
Adi Akavia, Oded Goldreich, Shafi Goldwasser, and Dana Moshkovitz.
\newblock On basing one-way functions on np-hardness.
\newblock In {\em Proceedings of the thirty-eighth annual ACM symposium on
  Theory of computing}, pages 701--710, 2006.

\bibitem{applebaum2008basing}
Benny Applebaum, Boaz Barak, and David Xiao.
\newblock On basing lower-bounds for learning on worst-case assumptions.
\newblock In {\em 2008 49th Annual IEEE Symposium on Foundations of Computer
  Science}, pages 211--220. IEEE, 2008.

\bibitem{artin}
Michael Artin.
\newblock {\em Algebra}.
\newblock Pearson, 2011.

\bibitem{bandeira2019computational}
Afonso~S Bandeira, Dmitriy Kunisky, and Alexander~S Wein.
\newblock Computational hardness of certifying bounds on constrained pca
  problems.
\newblock In {\em ITCS}, 2020.

\bibitem{blum2003noise}
Avrim Blum, Adam Kalai, and Hal Wasserman.
\newblock Noise-tolerant learning, the parity problem, and the statistical
  query model.
\newblock {\em Journal of the ACM (JACM)}, 50(4):506--519, 2003.

\bibitem{bogdanov2006worst}
Andrej Bogdanov and Luca Trevisan.
\newblock On worst-case to average-case reductions for np problems.
\newblock {\em SIAM Journal on Computing}, 36(4):1119--1159, 2006.

\bibitem{brennan2020statistical}
Matthew Brennan, Guy Bresler, Samuel Hopkins, Jerry Li, and Tselil Schramm.
\newblock Statistical query algorithms and low-degree tests are almost
  equivalent.
\newblock In {\em COLT}, 2021.

\bibitem{bresler2021algorithmic}
Guy Bresler and Brice Huang.
\newblock The algorithmic phase transition of random $ k $-sat for low degree
  polynomials.
\newblock In {\em FOCS}, 2021.

\bibitem{chen2019suboptimality}
Wei-Kuo Chen, David Gamarnik, Dmitry Panchenko, and Mustazee Rahman.
\newblock Suboptimality of local algorithms for a class of max-cut problems.
\newblock {\em The Annals of Probability}, 47(3):1587--1618, 2019.

\bibitem{cover1999elements}
Thomas~M Cover.
\newblock {\em Elements of information theory}.
\newblock John Wiley \& Sons, 1999.

\bibitem{daskalakis2006optimal}
Constantinos Daskalakis, Elchanan Mossel, and S{\'e}bastien Roch.
\newblock Optimal phylogenetic reconstruction.
\newblock In {\em Proceedings of the thirty-eighth annual ACM symposium on
  Theory of computing}, pages 159--168, 2006.

\bibitem{decelle2011asymptotic}
Aurelien Decelle, Florent Krzakala, Cristopher Moore, and Lenka Zdeborov{\'a}.
\newblock Asymptotic analysis of the stochastic block model for modular
  networks and its algorithmic applications.
\newblock {\em Physical Review E}, 84(6):066106, 2011.

\bibitem{large-deviations}
Amir Dembo and Ofer Zeitouni.
\newblock {\em Large Deviations Techniques and Applications}.
\newblock Springer, 2010.

\bibitem{deshpande2015finding}
Yash Deshpande and Andrea Montanari.
\newblock Finding hidden cliques of size $\sqrt{N/e}$ in nearly linear time.
\newblock {\em Foundations of Computational Mathematics}, 15(4):1069--1128,
  2015.

\bibitem{ding2019subexponential}
Yunzi Ding, Dmitriy Kunisky, Alexander~S Wein, and Afonso~S Bandeira.
\newblock Subexponential-time algorithms for sparse pca.
\newblock {\em arXiv preprint arXiv:1907.11635}, 2019.

\bibitem{durrett2019probability}
Rick Durrett.
\newblock {\em Probability: theory and examples}, volume~49.
\newblock Cambridge university press, 2019.

\bibitem{feigenbaum1993random}
Joan Feigenbaum and Lance Fortnow.
\newblock Random-self-reducibility of complete sets.
\newblock {\em SIAM Journal on Computing}, 22(5):994--1005, 1993.

\bibitem{feldman2017statistical}
Vitaly Feldman, Elena Grigorescu, Lev Reyzin, Santosh~S Vempala, and Ying Xiao.
\newblock Statistical algorithms and a lower bound for detecting planted
  cliques.
\newblock {\em Journal of the ACM (JACM)}, 64(2):1--37, 2017.

\bibitem{Felsenstein:04}
J.~Felsenstein.
\newblock {\em Inferring Phylogenies}.
\newblock Sinauer, New York, New York, 2004.

\bibitem{gamarnik2020low}
David Gamarnik, Aukosh Jagannath, and Alexander~S Wein.
\newblock Low-degree hardness of random optimization problems.
\newblock In {\em 2020 IEEE 61st Annual Symposium on Foundations of Computer
  Science (FOCS)}, pages 131--140. IEEE, 2020.

\bibitem{gamarnik2014limits}
David Gamarnik and Madhu Sudan.
\newblock Limits of local algorithms over sparse random graphs.
\newblock In {\em Proceedings of the 5th conference on Innovations in
  theoretical computer science}, pages 369--376, 2014.

\bibitem{ghorbani2021linearized}
Behrooz Ghorbani, Song Mei, Theodor Misiakiewicz, and Andrea Montanari.
\newblock Linearized two-layers neural networks in high dimension.
\newblock {\em The Annals of Statistics}, 49(2):1029--1054, 2021.

\bibitem{holmgren2020counterexamples}
Justin Holmgren and Alexander~S Wein.
\newblock Counterexamples to the low-degree conjecture.
\newblock In {\em ITCS}, 2020.

\bibitem{hopkins2018statistical}
Samuel Hopkins.
\newblock {\em Statistical inference and the sum of squares method}.
\newblock PhD thesis, Cornell University, 2018.

\bibitem{hopkins2017efficient}
Samuel Hopkins and David Steurer.
\newblock Efficient bayesian estimation from few samples: community detection
  and related problems.
\newblock In {\em 2017 IEEE 58th Annual Symposium on Foundations of Computer
  Science (FOCS)}, pages 379--390. IEEE, 2017.

\bibitem{jain2019accuracy}
Vishesh Jain, Frederic Koehler, Jingbo Liu, and Elchanan Mossel.
\newblock Accuracy-memory tradeoffs and phase transitions in belief
  propagation.
\newblock In {\em Conference on Learning Theory}, pages 1756--1771. PMLR, 2019.

\bibitem{janson2004robust}
Svante Janson and Elchanan Mossel.
\newblock Robust reconstruction on trees is determined by the second
  eigenvalue.
\newblock {\em The Annals of Probability}, 32(3B):2630--2649, 2004.

\bibitem{kamath2020approximate}
Pritish Kamath, Omar Montasser, and Nathan Srebro.
\newblock Approximate is good enough: Probabilistic variants of dimensional and
  margin complexity.
\newblock In {\em Conference on Learning Theory}, pages 2236--2262. PMLR, 2020.

\bibitem{kearns1998efficient}
Michael Kearns.
\newblock Efficient noise-tolerant learning from statistical queries.
\newblock {\em Journal of the ACM (JACM)}, 45(6):983--1006, 1998.

\bibitem{kesten1966additional}
Harry Kesten and Bernt~P Stigum.
\newblock Additional limit theorems for indecomposable multidimensional
  galton-watson processes.
\newblock {\em The Annals of Mathematical Statistics}, 37(6):1463--1481, 1966.

\bibitem{koehler2018representational}
Frederic Koehler and Andrej Risteski.
\newblock Representational power of relu networks and polynomial kernels:
  beyond worst-case analysis.
\newblock {\em arXiv preprint arXiv:1805.11405}, 2018.

\bibitem{kunisky2019notes}
Dmitriy Kunisky, Alexander~S Wein, and Afonso~S Bandeira.
\newblock Notes on computational hardness of hypothesis testing: Predictions
  using the low-degree likelihood ratio.
\newblock {\em arXiv preprint arXiv:1907.11636}, 2019.

\bibitem{lauritzen1996graphical}
Steffen~L Lauritzen.
\newblock {\em Graphical models}, volume~17.
\newblock Clarendon Press, 1996.

\bibitem{linial1993constant}
Nathan Linial, Yishay Mansour, and Noam Nisan.
\newblock Constant depth circuits, fourier transform, and learnability.
\newblock {\em Journal of the ACM (JACM)}, 40(3):607--620, 1993.

\bibitem{mao2021optimal}
Cheng Mao and Alexander~S Wein.
\newblock Optimal spectral recovery of a planted vector in a subspace.
\newblock {\em arXiv preprint arXiv:2105.15081}, 2021.

\bibitem{mei2021generalization}
Song Mei, Theodor Misiakiewicz, and Andrea Montanari.
\newblock Generalization error of random features and kernel methods:
  hypercontractivity and kernel matrix concentration.
\newblock {\em arXiv preprint arXiv:2101.10588}, 2021.

\bibitem{mezard2009information}
Marc Mezard and Andrea Montanari.
\newblock {\em Information, physics, and computation}.
\newblock Oxford University Press, 2009.

\bibitem{mohanty2021}
Sidhanth Mohanty, Siqi Liu, and Prasad Raghavendra.
\newblock On statistical inference when fixed points of belief propagation are
  unstable.
\newblock In {\em IEEE 62st Annual Symposium on Foundations of Computer Science
  (FOCS)}, 2021.

\bibitem{moitra2018algorithmic}
Ankur Moitra.
\newblock {\em Algorithmic aspects of machine learning}.
\newblock Cambridge University Press, 2018.

\bibitem{moitra2020parallels}
Ankur Moitra, Elchanan Mossel, and Colin Sandon.
\newblock Parallels between phase transitions and circuit complexity?
\newblock In {\em Conference on Learning Theory}, pages 2910--2946. PMLR, 2020.

\bibitem{mossel2001reconstruction}
Elchanan Mossel.
\newblock Reconstruction on trees: beating the second eigenvalue.
\newblock {\em Annals of Applied Probability}, pages 285--300, 2001.

\bibitem{mossel2004survey}
Elchanan Mossel.
\newblock Survey: Information flow on trees.
\newblock {\em arXiv preprint math/0406446}, 2004.

\bibitem{mossel2016deep}
Elchanan Mossel.
\newblock Deep learning and hierarchal generative models.
\newblock {\em arXiv preprint arXiv:1612.09057}, 2016.

\bibitem{mossel2014belief}
Elchanan Mossel, Joe Neeman, and Allan Sly.
\newblock Belief propagation, robust reconstruction and optimal recovery of
  block models.
\newblock In {\em Conference on Learning Theory}, pages 356--370. PMLR, 2014.

\bibitem{mossel2003information}
Elchanan Mossel and Yuval Peres.
\newblock Information flow on trees.
\newblock {\em The Annals of Applied Probability}, 13(3):817--844, 2003.

\bibitem{mossel2005learning}
Elchanan Mossel and S{\'e}bastien Roch.
\newblock Learning nonsingular phylogenies and hidden markov models.
\newblock In {\em Proceedings of the thirty-seventh annual ACM symposium on
  Theory of computing}, pages 366--375, 2005.

\bibitem{muthukumar2021classification}
Vidya Muthukumar, Adhyyan Narang, Vignesh Subramanian, Mikhail Belkin, Daniel
  Hsu, and Anant Sahai.
\newblock Classification vs regression in overparameterized regimes: Does the
  loss function matter?
\newblock {\em Journal of Machine Learning Research}, 22(222):1--69, 2021.

\bibitem{neyman1933ix}
Jerzy Neyman and Egon~Sharpe Pearson.
\newblock Ix. on the problem of the most efficient tests of statistical
  hypotheses.
\newblock {\em Philosophical Transactions of the Royal Society of London.
  Series A, Containing Papers of a Mathematical or Physical Character},
  231(694-706):289--337, 1933.

\bibitem{o2014analysis}
Ryan O'Donnell.
\newblock {\em Analysis of boolean functions}.
\newblock Cambridge University Press, 2014.

\bibitem{schramm2020computational}
Tselil Schramm and Alexander~S Wein.
\newblock Computational barriers to estimation from low-degree polynomials.
\newblock {\em arXiv preprint arXiv:2008.02269}, 2020.

\bibitem{shalev2014understanding}
Shai Shalev-Shwartz and Shai Ben-David.
\newblock {\em Understanding machine learning: From theory to algorithms}.
\newblock Cambridge university press, 2014.

\bibitem{steel2016phylogeny}
Mike Steel.
\newblock {\em Phylogeny: discrete and random processes in evolution}.
\newblock SIAM, 2016.

\bibitem{valiant2012finding}
Gregory Valiant.
\newblock Finding correlations in subquadratic time, with applications to
  learning parities and juntas.
\newblock In {\em 2012 IEEE 53rd Annual Symposium on Foundations of Computer
  Science}, pages 11--20. IEEE, 2012.

\bibitem{vershynin2018high}
Roman Vershynin.
\newblock {\em High-dimensional probability: An introduction with applications
  in data science}, volume~47.
\newblock Cambridge university press, 2018.

\bibitem{wein2020optimal}
Alexander~S Wein.
\newblock Optimal low-degree hardness of maximum independent set.
\newblock {\em arXiv preprint arXiv:2010.06563}, 2020.

\end{thebibliography}

\end{document}